\renewcommand{\mod}{\operatorname{mod}\nolimits}
\newcommand{\Hom}{\operatorname{Hom}\nolimits}
\newcommand{\gr}{{\operatorname{gr}\nolimits}}
\newcommand{\rad}{\operatorname{rad}\nolimits}
\newcommand{\kar}{\operatorname{char}\nolimits}
\newcommand{\Ann}{\operatorname{Ann}\nolimits}
\newcommand{\Ext}{\operatorname{Ext}\nolimits}
\newcommand{\op}{{\operatorname{op}\nolimits}}
\newcommand{\MaxSpec}{\operatorname{MaxSpec}\nolimits}
\newcommand{\HH}{\operatorname{HH}\nolimits}
\newcommand{\gldim}{{\operatorname{gldim}\nolimits}}
\newcommand{\ev}{{\operatorname{ev}\nolimits}}
\newcommand{\m}{\mathfrak{m}}
\newcommand{\rrad}{\mathfrak{r}}
\newcommand{\mo}{\mathfrak{o}}
\newcommand{\mt}{\mathfrak{t}}
\newcommand{\oa}{\bar{a}}
\renewcommand{\L}{\Lambda}
\newcommand{\lan}{\Lambda_N}
\newcommand{\A}{{\mathcal A}}
\newcommand{\R}{{\mathcal R}}
\newcommand{\N}{{\mathcal N}}
\newtheorem{lem}{Lemma}[section]
\newtheorem{prop}[lem]{Proposition}
\newtheorem{thm}[lem]{Theorem}
\theoremstyle{definition}
\newtheorem{defin}[lem]{Definition}
\newtheorem{example}[lem]{Example}
\begin{document}

\title[Support varieties and the Hochschild cohomology ring \dots]
{Support varieties and the Hochschild cohomology ring modulo
nilpotence}
\author[Snashall]{Nicole Snashall}
\address{Nicole Snashall\\ Department of Mathematics\\
University of Leicester\\
University Road\\
Leicester, LE1 7RH\\
England}
\email{N.Snashall@mcs.le.ac.uk}

\begin{abstract}
This paper is based on my talks given at the `41st Symposium on Ring
Theory and Representation Theory' held at Shizuoka University,
Japan, 5-7 September 2008. It begins with a brief introduction to the use of
Hochschild cohomology in developing the theory of support varieties
of \cite{SS} for a module over an artin algebra.
I then describe the current status of research concerning the
structure of the Hochschild cohomology ring modulo nilpotence.
\end{abstract}

\thanks{This paper is in final form and no version of it will be submitted
for publication elsewhere.}

\maketitle

\section*{Acknowledgement}
I would like to thank the organisers of the `41st Symposium on Ring Theory
and Representation Theory' held at Shizuoka University for the invitation to
speak at this conference. The cost of visiting Japan and attending the
meeting in Shizuoka was supported by the Japan Society for Promotion of
Science (JSPS) Grant-in Aid for Scientific Research (B) 18340011. I would
particularly like to thank the head of the grant, Kiyoichi Oshiro (Yamaguchi
University), for providing this financial support and invitation. I would
also like to thank Shigeo Koshitani and Katsunori Sanada for their
hospitality during my visit to Japan. I also thank the University of
Leicester for granting me study leave.

\section*{Introduction}
This survey article is based on talks given at the `41st Symposium
on Ring Theory and Representation Theory', Shizuoka University in
September 2008, and is organised as follows. Section 1 gives a brief
introduction to the use of Hochschild cohomology in developing the
theory of support varieties of \cite{SS}. Section 2 considers the
Hochschild cohomology ring of $\Omega$-periodic algebras. In
\cite{SS} it had been conjectured that the Hochschild cohomology
ring modulo nilpotence of a finite-dimensional algebra is always
finitely generated as an algebra. Section 3 describes many classes
of algebras where this holds, that is, that the Hochschild
cohomology ring modulo nilpotence is finitely generated as an
algebra. The final section is devoted to studying the recent
counterexample of Xu (\cite{Xu}) to this conjecture.

Throughout this paper, let $\L$ be an indecomposable finite-dimensional algebra over an
algebraically closed field $K$, with Jacobson radical $\rrad$.
Denote by $\L^e$ the enveloping algebra
$\L^{\op}\otimes_K \L$ of
$\L$, so that right $\L^e$-modules correspond to $\L,\L$-bimodules.
The Hochschild cohomology ring
$\HH^*(\L)$ of $\L$ is given by $\HH^*(\L)= \Ext^*_{\L^e}(\L, \L) =
\oplus_{i\geq 0}\Ext^i_{\L^e}(\L,\L)$ with the Yoneda product. We
may consider an element of $\Ext_{\L^e}^n(\L, \L)$ as an exact
sequence of $\L,\L$-bimodules $0\to \L \to E^n \to E^{n-1} \to
\cdots \to E^1 \to \L \to 0$ where the Yoneda product is the
`splicing together' of exact sequences.

The low-dimensional Hochschild cohomology groups are well-understood
via the bar resolution (\cite{Ho} and see \cite{B, Ha}), and may be
described as follows:
\begin{enumerate}
\item[$\bullet$] $\HH^0(\Lambda) = Z(\L)$, the centre of $\L$.
\item[$\bullet$] $\HH^1(\L)$ is the space of derivations modulo the inner
derivations. A derivation is a $K$-linear map $f:\L \to \L$ such
that $f(ab) = af(b) + f(a)b$ for all $a, b \in \L$. A derivation $f
: \L \to \L$ is an inner derivation if there is some $x\in \L$ such
that $f(a) = ax-xa$ for all $a \in \L$.
\item[$\bullet$] $\HH^2(\L)$ measures the infinitesimal deformations of the
algebra $\L$; in particular, if $\HH^2(\L) = 0$ then $\L$ is rigid,
that is, $\L$ has no non-trivial deformations.
\end{enumerate}

Recently there has been much work on the structure of the entire
Hochschild cohomology ring $\HH^*(\L)$ and its connections and
applications to the representation theory of $\L$. One important
property of Hochschild cohomology in this situation is its
invariance under derived equivalence, proved by Rickard in
\cite[Proposition 2.5]{R2}
(see also \cite[Theorem 4.2]{Ha} for a special case).
It is also well-known that
$\HH^*(\L)$ is a graded commutative ring, that is, for homogeneous
elements $\eta \in \HH^n(\L)$ and $\theta \in \HH^m(\L)$, we have
$\eta\theta = (-1)^{mn}\theta\eta$. Thus, when the characteristic of
$K$ is different from two, then every homogeneous element of odd
degree squares to zero. Let $\N$ denote the ideal of $\HH^*(\L)$
which is generated by the homogeneous nilpotent elements. Then, for
$\kar K \neq 2$, we have $\HH^{2k+1}(\L) \subseteq \N$ for all $k
\geq 0$. Hence (in all characteristics) the Hochschild cohomology
ring modulo nilpotence, $\HH^*(\L)/\N$, is a commutative
$K$-algebra.

Support varieties for finitely generated modules over a
finite-dimensional algebra $\L$ were introduced using Hochschild
cohomology by Snashall and Solberg in \cite{SS}, where it was also
conjectured that the Hochschild cohomology ring modulo nilpotence is
itself a finitely generated algebra. We remark that the graded
commutativity of $\HH^*(\L)$ implies that $\N$ is contained in every
maximal ideal of $\HH^*(\L)$ and so $\MaxSpec \HH^*(\L) =
\MaxSpec\HH^*(\L)/\N$. Although the recent paper \cite{Xu} provides
a counterexample to the conjecture of \cite{SS}, nevertheless
finiteness conditions play an key role in the structure of these
support varieties (see \cite{EHSST}), so it remains of particular
importance to determine the structure of the Hochschild cohomology
ring modulo nilpotence.

\section{Support varieties}

One of the motivations for introducing support varieties for
finitely generated modules over a finite-dimensional algebra came
from the rich theory of support varieties for finitely generated
modules over group algebras of finite groups. For a finite group $G$
and finitely generated $KG$-module $M$, the variety of $M$,
$V_G(M)$, was defined by Carlson \cite{Ca} to be the variety of the
kernel of the homomorphism
$$-\otimes_K M \colon H^{\ev}(G, K) \rightarrow \Ext^*_{KG}(M, M).$$
This map factors through the Hochschild cohomology ring of $KG$, so
that we have the commutative diagram
$$\xymatrix@C=1pt{
H^{\ev}(G, K)\ar[rr]^{-\otimes_KM}\ar[dr] & & \Ext^*_{KG}(M, M)\\
& \HH^*(KG)\ar[ur]_{-\otimes_{KG}M} & }$$ Linckelmann considered the
map $-\otimes_{KG} M \colon \HH^{\ev}(KG) \rightarrow \Ext^*_{KG}(M,
M)$ when studying varieties for modules for non-principal blocks
(\cite{L}).

Now, for any finite-dimensional algebra $\L$ and finitely generated
$\L$-module $M$, there is a ring homomorphism $\HH^*(\L)
\stackrel{-\otimes_\L M}{\longrightarrow} \Ext_\L^*(M,M)$. This ring
homomorphism turns out to provide a similarly fruitful theory of
support varieties for finitely generated modules over an arbitrary
finite-dimensional algebra. As usual, let $\mod \L$ denote the
category of all finitely generated left $\L$-modules.

For $M \in \mod\L$, the support variety of $M$, $V_{\HH^*(\L)}(M)$,
was defined by Snashall and Solberg in \cite[Definition 3.3]{SS} by
$$V_{\HH^*(\L)}(M) = \{ \m\in \MaxSpec\HH^*(\L)/\N \mid
\Ann_{\HH^*(\L)}\Ext_\L^*(M,M) \subseteq \m'\}$$ where $\m'$ is the
preimage in $\HH^*(\L)$ of the ideal $\m$ in $\HH^*(\L)/\N$. We
recall from above that $\MaxSpec \HH^*(\L) = \MaxSpec\HH^*(\L)/\N$.

Since we assumed that $\L$ is indecomposable, we know that
$\HH^0(\L)$ is a local ring. Thus $\HH^*(\L)/\N$ has a unique
maximal graded ideal which we denote by $\m_{\gr}$ so that $\m_{\gr}
= \langle\rad\HH^0(\L),\HH^{\geq 1}(\L)\rangle/\N$. From \cite[Proposition
3.4(a)]{SS}, we have $\m_{\gr} \in V_{\HH^*(\L)}(M)$ for all $M \in
\mod \L$. We say that the variety of $M$ is {\it trivial} if
$V_{\HH^*(\L)}(M) = \{\m_{\gr}\}$.

The following result collects some of the properties of varieties
from \cite{SS}. For ease of notation, we write $V(M)$ for
$V_{\HH^*(\L)}(M)$. We also denote the kernel of the projective
cover of $M \in \mod\L$ by $\Omega_\L(M)$.

Recall that we assume throughout this paper that $K$ is an
algebraically closed field. This assumption is a necessary
assumption in many of the results in this article. However, it is
not needed in all of \cite{SS}, and the interested reader may refer
back to \cite{SS} to see precisely what assumptions are required
there at each stage.

\begin{thm}\label{prop:SS} (\cite[Propositions 3.4, 3.7]{SS}) Let $M \in \mod\L$.
\begin{enumerate}
\item[(1)] $V(M) = V(\Omega_\L(M))$ if $\Omega_\L(M) \neq (0)$,
\item[(2)] $V(M_1\oplus M_2) = V(M_1)\cup V(M_2)$,
\item[(3)] If $0\to M_1 \to M_2 \to M_3 \to 0$ is an exact sequence, then
$V(M_{i_1}) \subseteq V(M_{i_2})\cup V(M_{i_3})$ whenever $\{i_1,
i_2, i_3\} = \{1, 2, 3\}$,
\item[(4)] If $\Ext_\L^i(M, M) = (0)$ for $i \gg 0$, or the projective or the injective
dimension of $M$ is finite, then the variety of $M$ is trivial.
\item[(5)] If $\L$ is selfinjective then $V(M) = V(\tau M)$, where $\tau$ is the Auslander-Reiten
translate. Hence all modules in a connected stable component of the
Auslander-Reiten quiver have the same variety.
\end{enumerate}
\end{thm}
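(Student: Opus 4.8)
The plan for proving this collection of properties relies on a single structural fact: support varieties are defined through the annihilator of $\Ext_\L^*(M,M)$ as a module over $\HH^*(\L)$ via the ring homomorphism $\varphi_M\colon\HH^*(\L)\to\Ext_\L^*(M,M)$. Consequently $V(M)$ is, up to the quotient by $\N$, the support (vanishing locus of the annihilator) of the graded $\HH^*(\L)$-module $\Ext_\L^*(M,M)$. I would begin by recording this translation explicitly, since each of the five parts then reduces to an algebraic statement about annihilators of graded modules over the commutative ring $\HH^*(\L)/\N$.

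For part (1), the key observation is that $\Omega_\L$ acts as a degree-shift on Yoneda $\Ext$: the horseshoe construction gives a natural isomorphism (or at least an annihilator-preserving relationship) between $\Ext_\L^*(M,M)$ and $\Ext_\L^*(\Omega_\L M,\Omega_\L M)$ as $\HH^*(\L)$-modules, since tensoring an extension of $\L$-bimodules with a module commutes appropriately with the syzygy operator. The two $\Ext$-algebras therefore have the same annihilator ideal in $\HH^*(\L)$, whence equal varieties. For part (2), I would use that $\Ext_\L^*(M_1\oplus M_2, M_1\oplus M_2)$ decomposes so that its annihilator is $\Ann(\Ext_\L^*(M_1,M_1))\cap\Ann(\Ext_\L^*(M_2,M_2))$; the vanishing locus of an intersection of ideals is the union of the vanishing loci, giving $V(M_1\oplus M_2)=V(M_1)\cup V(M_2)$. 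Part (3) is the prototypical long-exact-sequence argument: a short exact sequence $0\to M_1\to M_2\to M_3\to 0$ induces, after applying the functors $\Hom_\L(-,M_j)$ and $\Hom_\L(M_j,-)$, long exact sequences of $\HH^*(\L)$-modules connecting the three $\Ext$-algebras. From these one extracts that any element of $\HH^*(\L)$ annihilating both $\Ext_\L^*(M_{i_2},M_{i_2})$ and $\Ext_\L^*(M_{i_3},M_{i_3})$ annihilates $\Ext_\L^*(M_{i_1},M_{i_1})$ up to a bounded power, which at the level of radicals of annihilators yields the desired containment of varieties.

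Part (4) follows because each hypothesis forces $\Ext_\L^*(M,M)$ to be a finite-dimensional, hence nilpotent-at-high-degrees, $\HH^*(\L)$-module: the annihilator then contains $\HH^{\geq 1}(\L)$ to some power together with a power of $\rad\HH^0(\L)$, so its vanishing locus is exactly the unique maximal graded ideal $\m_{\gr}$, giving a trivial variety. For the finiteness of projective dimension, I would invoke that $\Ext_\L^i(M,M)=0$ for $i\gg 0$ in that case (and dualize for injective dimension). Finally, part (5) specializes to selfinjective $\L$, where $\Omega_\L$ is invertible on the stable category and the Auslander--Reiten translate satisfies $\tau\cong\Omega_\L^2\circ\nu$ for the Nakayama functor $\nu$; since $\nu$ is an equivalence preserving the $\HH^*(\L)$-action and $\Omega_\L$ preserves varieties by part (1), one concludes $V(M)=V(\tau M)$, and connectedness of a stable AR-component then propagates equality of varieties along irreducible maps. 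The main obstacle I anticipate is part (3): making precise how the three annihilator ideals interact through the connecting maps of the long exact sequences, and in particular verifying that passage to radicals (equivalently, to vanishing loci in $\MaxSpec$) is exactly what absorbs the lack of a clean ideal-theoretic inclusion at the level of the annihilators themselves.
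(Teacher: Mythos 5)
The survey you are reading gives no proof of this theorem --- it is quoted verbatim from \cite[Propositions 3.4, 3.7]{SS} --- so your proposal must be measured against the arguments in \cite{SS}, and your overall annihilator-and-support strategy is indeed the one used there. However, there is a genuine gap, and it sits exactly where you yourself flagged it. The long exact sequences obtained by applying $\Hom_\L(-,N)$ to $0\to M_1\to M_2\to M_3\to 0$ compare the groups $\Ext^*_\L(M_1,N)$, $\Ext^*_\L(M_2,N)$, $\Ext^*_\L(M_3,N)$ for a \emph{fixed} second argument $N$; they never directly relate the three diagonal algebras $\Ext^*_\L(M_i,M_i)$ whose annihilators define the varieties. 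The missing ingredient --- the actual engine of \cite{SS} --- is the two-variable variety $V(M,N)$ attached to $\Ann_{\HH^*(\L)}\Ext^*_\L(M,N)$, together with the compatibility lemma that the two actions of $\HH^*(\L)$ on $\Ext^*_\L(M,N)$, through $\varphi_M$ on one side and $\varphi_N$ on the other, agree up to the sign $(-1)^{|\eta||f|}$. This yields $\Ann\Ext^*_\L(M,M)\subseteq\Ann\Ext^*_\L(M,N)\supseteq\Ann\Ext^*_\L(N,N)$, i.e.\ $V(M,N)\subseteq V(M)\cap V(N)$, and with it your argument for (3) closes cleanly and with \emph{no} radical absorption or ``bounded power'': fixing $N=M_{i_1}$, if $a$ kills $\Ext^*_\L(M_{i_2},M_{i_1})$ and $b$ kills $\Ext^*_\L(M_{i_3},M_{i_1})$, the standard zig-zag through the long exact sequence shows $ab$ kills $\Ext^*_\L(M_{i_1},M_{i_1})$; the product of the two ideals (not powers) lies in the annihilator, and the vanishing locus of a product of ideals is exactly the union. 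The same lemma is what makes your part (2) correct, since the annihilator of $\Ext^*_\L(M_1\oplus M_2,M_1\oplus M_2)$ a priori involves the cross terms $\Ext^*_\L(M_i,M_j)$, which this containment absorbs.

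Several secondary points also need repair. In (1), the horseshoe lemma is the wrong tool, and the asserted isomorphism $\Ext^*_\L(M,M)\cong\Ext^*_\L(\Omega_\L M,\Omega_\L M)$ is false in general for non-selfinjective $\L$ (since $\Ext^i_\L(M,P)$ need not vanish for $P$ projective in the second argument); the correct mechanism is dimension shifting $\Ext^i_\L(\Omega_\L M,N)\cong\Ext^{i+1}_\L(M,N)$, $i\geq 1$, coming from $0\to\Omega_\L M\to P\to M\to 0$ because $\Ext^{\geq 1}_\L(P,N)=(0)$, after which one again needs the pair varieties, or else one applies the already-proved part (3) to this sequence using $V(P)=\{\m_{\gr}\}$ and the fact that $\m_{\gr}$ lies in every variety. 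In (4), a power of $\rad\HH^0(\L)$ need not lie in the annihilator; what you actually have is $\HH^{\geq n}(\L)\subseteq\Ann\Ext^*_\L(M,M)$ once the $\Ext$ groups vanish in high degrees, and one then argues that any maximal ideal containing the annihilator is prime, hence contains $\HH^{\geq 1}(\L)$, and since $\L$ is indecomposable $\HH^0(\L)$ is local, forcing the ideal to be $\m_{\gr}$. In (5), the Nakayama functor does \emph{not} preserve the $\HH^*(\L)$-action on the nose --- it twists it by the automorphism it induces on $\HH^*(\L)$, a point \cite{SS} handles via the twisted bimodule; granting $V(\tau M)=V(M)$, the constancy on a stable component then follows as you say, with the two containments $V(N)\subseteq V(M)$ and $V(M)\subseteq V(N)$ for an irreducible map $N\to M$ extracted from the AR-sequences ending at $M$ and at $\tau^{-1}N$ via parts (2) and (3).
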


For a finitely generated module $M$ over a group algebra of a finite
group $G$, it is well-known (\cite{Ca}) that the variety
of $M$ is trivial if and only if $M$ is a projective module. In
contrast, it is still an open question as to what are the
appropriate necessary and sufficient conditions on a module for it
to have trivial variety in the more general case where $\L$ is an
arbitrary finite-dimensional algebra. There are some partial results
for a particular class of monomial algebras in \cite{FuSn} (see
Section \ref{HHmodN}). Nevertheless, the converse to Theorem
\ref{prop:SS}(4) does not hold in general, and a counterexample may
be found in \cite[Example 4.7]{S}.

However, this question was successfully answered by Erdmann,
Holloway, Snashall, Solberg and Taillefer in \cite{EHSST}, by
placing some (reasonable) additional assumptions on $\L$. (Recall
that we are already assuming that the field $K$ is algebraically
closed.) Specifically, the following two finiteness conditions were
introduced.

{\bf (Fg1)}\ $H$ is a commutative Noetherian graded subalgebra of
$\HH^*(\L)$ with $H^0 = \HH^0(\L)$.

{\bf (Fg2)}\ $\Ext^*_\L(\L/\rrad, \L/\rrad)$ is a finitely generated
$H$-module.

As remarked in \cite{EHSST}, these two conditions together imply
that both $\HH^*(\L)$ and $\Ext^*_\L(\L/\rrad, \L/\rrad)$ are
finitely generated $K$-algebras. In particular, the properties {\bf
(Fg1)} and {\bf (Fg2)} hold where $\L = KG$, $G$ is a finite group,
and $H = \HH^{\ev}(\L)$ (\cite{E,V}). With conditions {\bf (Fg1)}
and {\bf (Fg2)}, we have the following results from \cite{EHSST},
where we define the variety using the subalgebra $H$ of $\HH^*(\L)$,
so that $V_H(M) = \MaxSpec(H/\Ann_H\Ext_\L^*(M,M))$.

\begin{thm} (\cite[Theorem 2.5]{EHSST})
Suppose that $\Lambda$ and $H$ satisfy {\bf (Fg1)} and {\bf (Fg2)}.
Then $\Lambda$ is Gorenstein. Moreover the following are equivalent
for $M \in \mod \L$:
\begin{enumerate}
\item[(i)] The variety of $M$ is trivial;
\item[(ii)] $M$ has finite projective dimension;
\item[(iii)] $M$ has finite injective dimension.
\end{enumerate}
\end{thm}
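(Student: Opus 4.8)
The plan is to first upgrade the finiteness hypotheses to a statement about all pairs of modules, then to deduce the Gorenstein property, and finally to prove the three-way equivalence, for which the only substantial direction will be (i)$\Rightarrow$(ii).

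First I would establish a d\'evissage lemma: under {\bf (Fg1)} and {\bf (Fg2)}, $\Ext^*_\L(M,N)$ is a finitely generated $H$-module for all $M,N\in\mod\L$. Starting from {\bf (Fg2)}, which is the case $M=N=\L/\rrad$, one induces on the composition lengths of $M$ and $N$. A short exact sequence $0\to N'\to N\to N''\to 0$ yields a long exact sequence of the $\Ext^*_\L(M,-)$ that is compatible with the $H$-action, so finite generation of the outer terms forces it for the middle term because $H$ is Noetherian by {\bf (Fg1)}; the same works in the first variable. Since every module is built from the simple modules and each simple is a summand of $\L/\rrad$, finite generation propagates to all pairs.

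Next I would prove that $\L$ is Gorenstein by bounding the injective dimension of $\L$ on each side. The key observation is that the two natural $\HH^*(\L)$-module structures on $\Ext^*_\L(M,N)$ --- one induced through $-\otimes_\L M$ in the first variable, the other through $-\otimes_\L N$ in the second --- agree up to sign. Taking $N=\L$, the second structure factors through the map $\HH^*(\L)\to\Ext^*_\L(\L,\L)$, whose positive-degree part vanishes because $\L$ is projective as a left module. Hence the augmentation ideal $H^{\geq1}$ annihilates $\Ext^*_\L(\L/\rrad,\L)$, and combined with the d\'evissage lemma this exhibits $\Ext^*_\L(\L/\rrad,\L)$ as a finitely generated module over $H/H^{\geq1}=\HH^0(\L)$, so it is finite-dimensional over $K$; thus $\Ext^i_\L(\L/\rrad,\L)=0$ for $i\gg0$ and the injective dimension of ${}_\L\L$ is finite. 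Applying the same argument to $\L^{\op}$, which satisfies {\bf (Fg1)} and {\bf (Fg2)} with the same $H$ since $\HH^*(\L^{\op})\cong\HH^*(\L)$ and $\Ext^*_{\L^{\op}}(\L^{\op}/\rrad,\L^{\op}/\rrad)\cong\Ext^*_\L(\L/\rrad,\L/\rrad)$ via $\D$, bounds the injective dimension of $\L_\L$ as well, so $\L$ is Gorenstein.

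For the equivalence I would first record the dictionary that $V_H(M)=\{\m_{\gr}\}$ if and only if $\Ext^i_\L(M,M)=0$ for $i\gg0$: triviality of the variety means $H^{\geq1}$ acts nilpotently on the finitely generated $H$-module $\Ext^*_\L(M,M)$, which by the d\'evissage lemma forces it to be finite-dimensional over $K$, and the converse is immediate. Given this, (ii)$\Rightarrow$(i) and (iii)$\Rightarrow$(i) follow at once from Theorem~\ref{prop:SS}(4), since finite projective or injective dimension makes $\Ext^i_\L(M,M)$ vanish for large $i$. The substantive direction is (i)$\Rightarrow$(ii). Here I would use the Gorenstein property: replacing $M$ by a high syzygy $\Omega_\L^d(M)$ with $d$ exceeding the Gorenstein dimension makes $G=\Omega_\L^d(M)$ totally reflexive, and trivial variety gives $\Ext^i_\L(G,G)=0$ for $i\gg0$. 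One then argues, via the complete resolution of $G$ and the resulting Tate cohomology, that eventual vanishing of the self-extensions of a totally reflexive module forces $G$ to be projective, whence $\pd M<\infty$; the symmetric argument over $\L^{\op}$, using $\D$, yields (i)$\Rightarrow$(iii). I expect the main obstacle to be precisely this last step, the passage from eventual vanishing of $\Ext^*_\L(G,G)$ to projectivity of the totally reflexive module $G$: it is here that Gorensteinness and the finiteness conditions must be combined, since for a general totally reflexive module high-degree vanishing of self-extensions need not be inherited by all degrees without such hypotheses.
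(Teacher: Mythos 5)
Your first two stages are sound and in fact coincide with what happens in \cite{EHSST}: the d\'evissage lemma (finite generation of $\Ext^*_\L(M,N)$ over $H$ for all pairs, by long exact sequences and Noetherianity of $H$) is their Proposition 1.4-type reduction, and your proof of Gorensteinness --- the two $H$-actions on $\Ext^*_\L(M,N)$ agree up to sign, the second-variable action on $\Ext^*_\L(\L/\rrad,\L)$ factors through $\Ext^{>0}_\L(\L,\L)=(0)$, hence $H^{\geq 1}$ annihilates, hence finite-dimensionality and $\id{}_\L\L<\infty$, then pass to $\L^{\op}$ --- is exactly their argument. Your dictionary ``$V_H(M)$ trivial $\Leftrightarrow$ $\Ext^i_\L(M,M)=0$ for $i\gg 0$'' is also correct under {\bf (Fg1)} and {\bf (Fg2)}.

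The genuine gap is in your (i)$\Rightarrow$(ii). Your reduction to a high syzygy $G=\Omega^d_\L(M)$, totally reflexive by Gorensteinness, with $\Ext^i_\L(G,G)=0$ for $i\gg 0$, is fine; but the step you then invoke --- that for a totally reflexive module over a Gorenstein algebra, eventual vanishing of self-extensions forces projectivity ``via the complete resolution and Tate cohomology'' --- is not an argument, and as a standalone statement it is \emph{false}. Indeed this survey itself records the failure: the converse of Theorem \ref{prop:SS}(4) does not hold in general, with a counterexample in \cite[Example 4.7]{S} built on the selfinjective algebra $\L_q=K\langle x,y\rangle/(x^2,xy+qyx,y^2)$ with $q$ not a root of unity (the \cite{BGMS} example above); over a selfinjective algebra every module is totally reflexive, with its minimal resolution completing to a complete resolution, yet the two-dimensional Liu--Schulz modules $\L_q/(x-ry)\L_q$ have $\Ext^i(M,M)=0$ for all $i\geq 2$ without being projective. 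So Tate-cohomology vanishing in high degrees does not propagate downwards for free: the finiteness conditions must enter this step in an essential way, and you give no mechanism by which they do --- which you half-concede in your final sentence. The way \cite{EHSST} actually closes the implication is different and avoids complete resolutions entirely: the first-variable $H$-action on $\Ext^*_\L(M,N)$ is Yoneda composition with the image of $H$ in $\Ext^*_\L(M,M)$, so if a power of $\m_{\gr}$ annihilates $\Ext^*_\L(M,M)$ (triviality of the variety, via Noetherianity), the same power annihilates $\Ext^*_\L(M,\L/\rrad)$; by your own d\'evissage lemma the latter is finitely generated over $H$, hence finite-dimensional over $K$, so $\Ext^i_\L(M,S)=0$ for all simples $S$ and $i\gg 0$, which terminates the \emph{minimal} projective resolution of $M$ and gives $\pd M<\infty$ directly. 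Gorensteinness is then needed only for the elementary equivalence (ii)$\Leftrightarrow$(iii), not inside (i)$\Rightarrow$(ii). I recommend you replace your totally-reflexive step with this annihilator-transfer argument.
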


\begin{thm} (\cite[Theorem 4.4]{EHSST})
Suppose that $\L$ and $H$ satisfy {\bf (Fg1)} and {\bf (Fg2)}. Given
a homogeneous ideal ${\mathfrak a}$ in $H$, there is a module $M \in
\mod \L$ such that $V_H(M) = V_H({\mathfrak a})$.
\end{thm}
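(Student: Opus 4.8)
The plan is to realise $V_H(\mathfrak{a})$ as the support variety of a module assembled from Carlson-type modules attached to a generating set of $\mathfrak{a}$, mimicking Carlson's group-theoretic construction of the modules $L_\zeta$. Since $H$ satisfies \textbf{(Fg1)} it is Noetherian, so I may write $\mathfrak{a} = \langle \eta_1, \dots, \eta_r\rangle$ with each $\eta_i$ homogeneous; any generator of degree zero lies in $\rad H^0$, hence is nilpotent and defines all of $\MaxSpec H$, so it contributes nothing to the intersection and I may assume each $\eta_i$ has positive degree $d_i$. Because $V_H(\mathfrak{a}) = \bigcap_{i=1}^r V_H(\langle\eta_i\rangle)$, it is enough to build a single module whose variety is this intersection, which I would do by cutting out one hypersurface at a time.

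The basic building block is a relative Carlson module. Given a homogeneous $\eta \in H$ of degree $d \geq 1$ and a module $M \in \mod\L$, the ring homomorphism $-\otimes_\L M \colon \HH^*(\L) \to \Ext^*_\L(M,M)$ sends $\eta$ to a class $\eta_M \in \Ext^d_\L(M,M)$, which I represent as a homomorphism $\widehat{\eta}_M \colon \Omega_\L^d M \to M$ read off from a minimal projective resolution. I define $L_\eta(M)$ to be the kernel of $\widehat{\eta}_M$, after a syzygy shift (replacing $M$ by a suitable $\Omega_\L^j M$) to arrange surjectivity, using Theorem \ref{prop:SS}(1) to see that this shift leaves the variety unchanged. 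The target identity is
$$V_H(L_\eta(M)) = V_H(M) \cap V_H(\langle\eta\rangle),$$
the analogue of Carlson's computation $V_G(L_\zeta) = V_G(\zeta)$, but obtained here without a tensor product over $K$ by working directly with the $\L$-module $M$.

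With this in hand the theorem follows by induction. I start from $M_0 = \L$, which is projective and has variety equal to the whole of $\MaxSpec H$, and set $M_{i} = L_{\eta_{i}}(M_{i-1})$ for $i = 1, \dots, r$. The intersection identity gives $V_H(M_i) = V_H(M_{i-1}) \cap V_H(\langle\eta_i\rangle)$, so $M = M_r$ satisfies $V_H(M) = \bigcap_{i=1}^r V_H(\langle\eta_i\rangle) = V_H(\mathfrak{a})$, as required.

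The hard part is the intersection identity for $L_\eta(M)$. From the defining exact sequence $0 \to L_\eta(M) \to \Omega_\L^d M \xrightarrow{\widehat{\eta}_M} M \to 0$ I would extract the associated long exact sequences in $\Ext$, regard them as sequences of $H$-modules via the Yoneda action, and compare annihilators. The inclusion $V_H(L_\eta(M)) \subseteq V_H(M) \cap V_H(\langle\eta\rangle)$ should follow from Theorem \ref{prop:SS}(3) and (1), which give $V_H(L_\eta(M)) \subseteq V_H(M)$, together with the fact that multiplication by $\eta$ annihilates $\Ext^*_\L(L_\eta(M), L_\eta(M))$ in high degrees. The reverse inclusion is the delicate one: it requires that passing from $M$ to $L_\eta(M)$ cuts the support by exactly the hypersurface of $\eta$ and no more. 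Here \textbf{(Fg2)} is essential, since it forces each $\Ext^*_\L(M,M)$ to be a finitely generated $H$-module, allowing me to translate the problem into commutative algebra over the Noetherian ring $H$, where the support of a finitely generated module drops under multiplication by $\eta$ precisely along $V_H(\langle\eta\rangle)$ via a dimension and associated-prime argument. Care is also needed to check that $L_\eta(M)$ is, up to its variety, independent of the choice of resolution and of the syzygy shift used to make $\widehat{\eta}_M$ surjective.
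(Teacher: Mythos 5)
Your outline is, in skeleton, the route the cited proof actually takes: use \textbf{(Fg1)} to write $\mathfrak{a}=\langle\eta_1,\dots,\eta_t\rangle$ with the $\eta_i$ homogeneous of positive degree, and realise $V_H(\mathfrak{a})=\bigcap_i V_H(\langle\eta_i\rangle)$ by cutting out one hypersurface at a time with Carlson-type modules. In \cite{EHSST} this is packaged with bimodules: each $\eta_i$ is represented by a bimodule map $\Omega^{d_i}_{\L^e}(\L)\to\L$, a bimodule $K_{\eta_i}$ is built from it, and one takes $M=K_{\eta_1}\otimes_\L\cdots\otimes_\L K_{\eta_t}$; since $\Omega^{d}_{\L^e}(\L)\otimes_\L X\cong\Omega^{d}_{\L}(X)\oplus(\text{projective})$, this coincides up to projective summands with your iterated one-sided construction, and the identity $V_H(K_\eta\otimes_\L X)=V_H(X)\cap V_H(\langle\eta\rangle)$ is exactly your intersection identity. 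However, two of your steps fail as written. First, the base of your induction is wrong: $\L$ is projective in $\mod\L$, so by Theorem \ref{prop:SS}(4) its variety is \emph{trivial}, $V_H(\L)=\{\m_{\gr}\}$, not all of $\MaxSpec H$; worse, $\Omega_\L(\L)=(0)$, so $\widehat{\eta}_{\L}=0$ and $L_{\eta_1}(\L)=(0)$, and the induction never leaves the ground. The correct seed is $M_0=\L/\rrad$: the annihilator of $\Ext^*_\L(\L/\rrad,\L/\rrad)$ in $H$ consists of nilpotent elements, whence $V_H(\L/\rrad)=\MaxSpec H$.

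Second, a syzygy shift does not produce surjectivity: up to projective summands $\widehat{\eta}_{\Omega^j_\L M}$ is just $\Omega^j_\L(\widehat{\eta}_M)$, and there is no reason for it to become onto (already for $\L=K[x]/(x^2)$ in odd characteristic, every representing map $\Omega^1_{\L^e}(\L)\to\L$ has image in the radical). The standard repair, without which the kernel construction is not even defined, is to pass to a \emph{non-minimal} syzygy: take the kernel of $(\widehat{\eta}_M,\pi)\colon\Omega^d_\L M\oplus P\to M$ with $\pi\colon P\to M$ a projective cover; since $\pi$ factors through a projective, this map still represents $\eta\otimes_\L M$ and is now surjective. (Equivalently, avoid kernels via the pushout sequence $0\to M\to K\to\Omega^{d-1}_\L M\to 0$ classified by $\eta\otimes_\L M$; this is in effect how the bimodules $K_{\eta_i}$ of \cite{EHSST} sidestep the issue.) Finally, your sketch of the delicate inclusion should be anchored where \cite{EHSST} anchors it: apply $\Ext^*_\L(-,\L/\rrad)$ to the defining short exact sequence; the connecting homomorphism is, up to sign, multiplication by $\eta$ on $E=\Ext^*_\L(M,\L/\rrad)$, which by \textbf{(Fg2)} is a finitely generated graded $H$-module whose support is $V_H(M)$, and then the commutative-algebra facts $\operatorname{Supp}(E/\eta E)=\operatorname{Supp}(E)\cap V_H(\langle\eta\rangle)$ and $\operatorname{Supp}(\Ker(\eta\cdot\,))\subseteq\operatorname{Supp}(E)\cap V_H(\langle\eta\rangle)$ give both inclusions at once. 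One further pedantic point: a homogeneous degree-zero generator need not lie in $\rad H^0$ --- it could be a unit, in which case $\mathfrak{a}=H$ and $V_H(\mathfrak{a})$ is empty, realised by $M=(0)$. With the seed $\L/\rrad$ and the non-minimal-syzygy fix, your induction becomes a correct rendering of the argument of \cite{EHSST}.
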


\begin{thm} (\cite[Theorem 2.5 and Propositions 5.2, 5.3]{EHSST})
Suppose that $\L$ and $H$ satisfy {\bf (Fg1)} and {\bf (Fg2)} and
that $\Lambda$ is selfinjective. Let $M \in \mod \L$ be
indecomposable.
\begin{enumerate}
\item[(1)] $V_H(M)$ is trivial $\Leftrightarrow M$ is projective.
\item[(2)] $V_H(M)$ is a line $\Leftrightarrow M$ is $\Omega$-periodic.
\end{enumerate}
\end{thm}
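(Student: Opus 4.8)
The plan is to derive both statements from the characterisation of trivial varieties already available (\cite[Theorem 2.5]{EHSST}, quoted above) together with the good homological behaviour of selfinjective algebras and a hypersurface-section (``$L_\zeta$'') reduction. First I would dispose of (1). By \cite[Theorem 2.5]{EHSST}, $V_H(M)$ is trivial if and only if $\pd M<\infty$. Over a selfinjective algebra projective and injective modules coincide, and a standard argument then shows that a module of finite projective dimension is already projective: if $0\to P_n\to\cdots\to P_0\to M\to 0$ is a minimal projective resolution with $n\ge 1$, then $\Omega^n_\L M=P_n$ is projective, hence injective, so the inclusion $P_n\hookrightarrow P_{n-1}$ splits and $\Omega^{n-1}_\L M$ acquires a nonzero projective summand, contradicting minimality. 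Thus $\pd M<\infty$ forces $M$ projective, and the converse is immediate; combining with \cite[Theorem 2.5]{EHSST} gives (1).

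For (2) I would first record the identity $\dim V_H(M)=\operatorname{cx}(M)$, where $\operatorname{cx}(M)$ is the polynomial rate of growth of a minimal projective resolution of $M$; under {\bf (Fg1)} and {\bf (Fg2)} this is the Krull dimension of $H/\Ann_H\Ext^*_\L(M,M)$. For the direction ($\Leftarrow$), if $M$ is $\Omega$-periodic, say $\Omega^d_\L M\cong M$, then its minimal projective resolution is periodic, so the Betti numbers are bounded and $\operatorname{cx}(M)\le 1$; since $M$ is indecomposable nonprojective, part (1) shows $V_H(M)\neq\{\m_{\gr}\}$, so $\operatorname{cx}(M)=1$ and $V_H(M)$ is a line.

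The direction ($\Rightarrow$) is the substantive one. Suppose $V_H(M)$ is a line. Because $H$ is Noetherian and graded and $K$ is algebraically closed, and $V_H(M)$ is a one-dimensional cone, I can choose a homogeneous element $\zeta\in H$ of positive degree whose hypersurface $V_H(\zeta)$ meets $V_H(M)$ only at the apex $\m_{\gr}$; in particular $\zeta\notin\Ann_H\Ext^*_\L(M,M)$, so its image $\widehat\zeta\colon\Omega^n_\L M\to M$ in $\Ext^n_\L(M,M)$ is nonzero. Forming the associated module $L_\zeta$ (the cone of $\widehat\zeta$ in the stable category) and using the intersection formula $V_H(L_\zeta)=V_H(M)\cap V_H(\zeta)$, one gets $V_H(L_\zeta)=\{\m_{\gr}\}$. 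By part (1) this makes $L_\zeta$ projective, i.e. $L_\zeta=0$ in the stable category, so $\widehat\zeta$ is a stable isomorphism $\Omega^n_\L M\cong M$. Since $\L$ is selfinjective, the stable category $\underline{\mod}\,\L$ is triangulated with $\Omega_\L$ an autoequivalence, and as $M$ (hence $\Omega^n_\L M$) is indecomposable nonprojective and so has no projective summand, the stable isomorphism is a genuine one; thus $\Omega^n_\L M\cong M$ and $M$ is $\Omega$-periodic.

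The main obstacle is precisely this ($\Rightarrow$) direction, and within it two technical points. The first is the intersection formula $V_H(L_\zeta)=V_H(M)\cap V_H(\zeta)$ for the hypersurface-section module, which is where the ring-theoretic content of \cite{SS} and the finiteness conditions {\bf (Fg1)}, {\bf (Fg2)} genuinely enter (one must relate $\Ann_H\Ext^*_\L(L_\zeta,L_\zeta)$ to the radical of $\Ann_H\Ext^*_\L(M,M)$ together with $(\zeta)$). The second is the passage from the stable isomorphism $L_\zeta=0$ to an honest periodicity $\Omega^n_\L M\cong M$; this is exactly where selfinjectivity and indecomposability are indispensable, since they guarantee that $\Omega_\L$ is invertible on $\underline{\mod}\,\L$ and that no projective summands obstruct lifting a stable isomorphism to a true one.
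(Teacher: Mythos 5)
The survey states this theorem without proof, citing \cite{EHSST}, so your attempt can only be measured against the original arguments there. Your part (1) is correct and standard: combine the quoted characterisation (trivial variety $\Leftrightarrow$ finite projective dimension) with the fact that over a selfinjective algebra finite projective dimension forces projectivity. Your ($\Rightarrow$) direction of (2) is also essentially the right mechanism and the one used in \cite{EHSST}: choose a homogeneous $\zeta \in H$ of positive degree cutting the line $V_H(M)$ only at $\m_{\gr}$, form the cone $L_\zeta$ of a representative $\widehat\zeta\colon \Omega^n_\L M \to M$, invoke the intersection formula $V_H(L_\zeta) = V_H(M)\cap V_H(\langle\zeta\rangle)$ (you correctly identify this as the place where {\bf (Fg1)} and {\bf (Fg2)} do real work), conclude $L_\zeta$ is projective by part (1), and lift the resulting stable isomorphism $\Omega^n_\L M \cong M$ to a genuine one using selfinjectivity, indecomposability and Krull--Schmidt.

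The genuine gap is in the ($\Leftarrow$) direction of (2). From $\Omega$-periodicity you correctly get bounded Betti numbers, hence $\operatorname{cx}(M)\leq 1$ and $\dim V_H(M)=1$ after excluding triviality. But a one-dimensional homogeneous (conical) variety is in general a \emph{finite union} of lines through $\m_{\gr}$, not a single line; the step ``$\operatorname{cx}(M)=1$, therefore $V_H(M)$ is a line'' is exactly the nontrivial content of the cited proposition and cannot be correct as an unsupported inference --- for $M = M_1\oplus M_2$ with $M_1, M_2$ periodic with distinct lines one has $\operatorname{cx}(M)=1$ while $V_H(M)$ is two lines. So indecomposability of $M$ must enter at precisely this point, and your argument never uses it in this direction. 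What is missing is an irreducibility (connectedness) argument: either a Carlson-type theorem that if $V_H(M)=V_1\cup V_2$ with $V_1\cap V_2=\{\m_{\gr}\}$ and both proper then $M$ decomposes compatibly, or a direct argument exploiting the periodicity isomorphism $\eta\in\Ext^d_\L(M,M)$ (which makes $\Ext^{\geq 1}_\L(M,M)$ finitely generated over $K[\eta]$) together with the locality of the stable endomorphism ring to show the support has a unique minimal prime below $\m_{\gr}$. Until one of these is supplied, you have only shown that $V_H(M)$ is a one-dimensional cone, not that it is a line.
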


Our final results in this section concern the representation type of
$\L$ and the structure of the Auslander-Reiten quiver; for more
details see \cite{EHSST,S}. First we recall that Heller showed that
if $\L$ is of
finite representation type then the complexity of a finitely
generated module is at most 1 (\cite{He}), and that Rickard showed that
if $\L$ is of
tame representation type then the complexity of a finitely generated
module is at most 2 (\cite{R}). However there are selfinjective
preprojective algebras of wild representation type where all
indecomposable modules are either projective or periodic and so have
complexity at most 1. Nevertheless, the next result uses the
Hochschild cohomology ring to give some information on the
representation type of an algebra.

\begin{thm} (\cite[Proposition 6.1]{EHSST})
Suppose that $\L$ and $H$ satisfy {\bf (Fg1)} and {\bf (Fg2)} and
that $\L$ is selfinjective. Suppose also that $\dim H \geq 2$. Then
$\L$ is of infinite representation type, and $\L$ has an infinite
number of indecomposable periodic modules lying in infinitely many
different components of the stable Auslander-Reiten quiver.
\end{thm}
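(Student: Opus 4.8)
The plan is to translate the statement entirely into the language of support varieties and then to exploit the realizability theorem of \cite{EHSST} together with the fact that $H$ has Krull dimension at least two. Recall that under {\bf (Fg1)} and {\bf (Fg2)} the algebra $H$ is a finitely generated commutative graded $K$-algebra, so its maximal ideal spectrum $\MaxSpec H$ is a cone (a homogeneous affine variety) with vertex the unique graded maximal ideal $\m_{\gr}$, and the support varieties $V_H(M)$ are homogeneous closed subvarieties of this cone, each necessarily containing $\m_{\gr}$. Under the standing hypotheses, the characterisation of periodic modules (\cite[Propositions 5.2, 5.3]{EHSST}) says that for an indecomposable $M \in \mod\L$ the variety $V_H(M)$ is a line (a one-dimensional cone through the vertex) precisely when $M$ is $\Omega$-periodic. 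The whole argument therefore reduces to producing infinitely many distinct lines in $\MaxSpec H$, each realised as the variety of an indecomposable module.

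For the geometric input, since $\dim H \geq 2$ the projectivisation $\operatorname{Proj} H$ is a projective variety of dimension at least one over the algebraically closed (hence infinite) field $K$, and so has infinitely many closed points. Each closed point $p$ corresponds to a distinct line $\ell_p$ through the vertex $\m_{\gr}$, and each $\ell_p$ is of the form $V_H(\mathfrak p)$ for a suitable homogeneous ideal $\mathfrak p$ of $H$. First I would apply the realizability result (\cite[Theorem 4.4]{EHSST}) to obtain, for each $p$, a module $M \in \mod\L$ with $V_H(M) = \ell_p$. Decomposing $M$ into indecomposables and using additivity of varieties on direct sums (the analogue of Theorem~\ref{prop:SS}(2) for $V_H$, established in \cite{EHSST}), I get $\ell_p = \bigcup_i V_H(M_i)$; since $\ell_p$ is an irreducible one-dimensional cone, and every $V_H(M_i)$ is a homogeneous closed subvariety containing $\m_{\gr}$, the only possibilities are $V_H(M_i) = \{\m_{\gr}\}$ or $V_H(M_i) = \ell_p$. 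Hence some indecomposable summand $M_p := M_i$ must satisfy $V_H(M_p) = \ell_p$, and by the periodicity characterisation above this $M_p$ is $\Omega$-periodic (and in particular non-projective, since projectives have trivial variety).

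It then remains to assemble the conclusions. The modules $\{M_p\}_p$ have pairwise distinct varieties $\ell_p$, hence are pairwise non-isomorphic, and as there are infinitely many points $p$ this already shows that $\L$ is of infinite representation type. For the statement about Auslander--Reiten components, I would invoke Theorem~\ref{prop:SS}(5): since $\L$ is selfinjective, all modules lying in a single connected component of the stable Auslander--Reiten quiver share a common variety. Consequently two indecomposables with different varieties lie in different components, so the infinitely many periodic modules $M_p$, having pairwise distinct varieties, must lie in infinitely many distinct stable components.

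The main obstacle is the realizability step of the second paragraph: everything hinges on converting the purely ring-theoretic hypothesis $\dim H \geq 2$ into infinitely many genuinely distinct lines and then realising each as the variety of an (indecomposable) module. The first half of this requires only that $\operatorname{Proj} H$ be a positive-dimensional variety over an infinite field, but the second half relies essentially on \cite[Theorem 4.4]{EHSST}, whose proof is the technical heart of the matter; once realizability is granted, the extraction of an indecomposable periodic summand and the counting of components are formal consequences of the irreducibility of a line and Theorem~\ref{prop:SS}(5).
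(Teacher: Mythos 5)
This survey states the theorem without proof, citing \cite[Proposition 6.1]{EHSST}, so there is no in-paper argument to compare against; your reconstruction is correct and is essentially the argument of the cited source: $\dim H \geq 2$ yields infinitely many one-dimensional homogeneous irreducible subvarieties (lines through $\m_{\gr}$), realizability (\cite[Theorem 4.4]{EHSST}) together with additivity of varieties on direct sums and irreducibility of a line extracts an indecomposable module realising each line, the line/periodicity characterisation (\cite[Propositions 5.2, 5.3]{EHSST}) makes these modules $\Omega$-periodic, and constancy of the variety on connected stable Auslander--Reiten components gives infinitely many components. Your argument is sound as written, the only implicit point worth noting being that every line in $\MaxSpec H$ is in fact of the form $V_H(\mathfrak{a})$ for a homogeneous ideal $\mathfrak{a}$ (take the corresponding homogeneous prime), which is exactly what the quoted form of the realizability theorem requires.
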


We end this section with the statement of Webb's theorem (\cite{W})
for group algebras of finite groups and a generalisation of this
theorem from \cite{EHSST}.

\begin{thm} (\cite{W})
Let $G$ be a finite group and suppose that $\kar K$ divides $|G|$. Then
the orbit graph of a connected component of the stable
Auslander-Reiten quiver of $KG$ is one of the following:
\begin{enumerate}
\item[(a)] a finite Dynkin diagram $({\mathbb A}_n, {\mathbb D}_n, {\mathbb E}_{6,7,8})$,
\item[(b)] a Euclidean diagram $(\tilde{{\mathbb A}}_n, \tilde{{\mathbb
D}}_n, \tilde{{\mathbb E}}_{6,7,8}, \tilde{{\mathbb A}}_{12})$, or
\item[(c)] an infinite Dynkin diagram of
type ${\mathbb A}_\infty$, ${\mathbb D}_\infty$ or ${\mathbb
A}^\infty_\infty$.
\end{enumerate}
\end{thm}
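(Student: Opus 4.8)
The plan is to reduce Webb's classification to the combinatorial theorem of Happel, Preiser and Ringel on (valued) trees admitting a subadditive function, after first describing the global shape of a stable component by means of Riedtmann's structure theorem. Throughout one uses that $KG$ is a symmetric algebra, so that almost split sequences exist for every non-projective indecomposable module and the stable Auslander--Reiten quiver is a stable translation quiver with translate $\tau$; recall also Heller's result that $\tau \cong \Omega^2$ on the stable category, which underlies the periodicity phenomena relevant here.

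First I would invoke Riedtmann's structure theorem: any connected component $\Theta$ of the stable Auslander--Reiten quiver is isomorphic to an orbit quiver $\Z T/\Pi$, where $T$ is a directed tree --- its underlying valued graph being, by definition, the orbit graph (tree class) of $\Theta$ --- and $\Pi$ is an admissible group of automorphisms of $\Z T$. The task is thereby reduced to showing that the only trees $T$ that can occur are those in the lists (a), (b) and (c).

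Next I would produce a subadditive function on $\Theta$ from the module dimensions. For each non-projective indecomposable $M$ the almost split sequence $0 \to \tau M \to E \to M \to 0$ gives $\dim_K \tau M + \dim_K M = \dim_K E$; passing to the stable quiver deletes any projective--injective summand of $E$, so that, writing $X \to M$ for the arrows of $\Theta$ counted with multiplicity, one obtains $\dim_K M + \dim_K \tau M \geq \sum_{X \to M} \dim_K X$, with equality precisely when the relevant middle term has no projective summand. Thus $d = \dim_K$ is a subadditive, $\Z_{>0}$-valued function on $\Theta$. Lifting $d$ to a $\Pi$-invariant function on $\Z T$ and then passing to the tree (the descent carried out in the application part of Happel--Preiser--Ringel) yields a subadditive function on $T$ itself, additive exactly when no almost split sequence in $\Theta$ has a projective middle term.

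Finally I would apply the Vinberg-type classification: a valued tree carrying a positive subadditive function is a finite Dynkin, a Euclidean, or an infinite Dynkin diagram, the additive case giving the Euclidean diagrams and the properly subadditive case giving the finite or infinite Dynkin diagrams according as $d$ is bounded or unbounded. It then remains to determine which valuations survive. Since $K$ is algebraically closed, all the endomorphism skew fields attached to irreducible maps are $K$ itself, so every arrow of $\Theta$ carries a symmetric valuation; this excludes $\mathbb{B}_n, \mathbb{C}_n, \mathbb{F}_4, \mathbb{G}_2, \mathbb{B}_\infty, \mathbb{C}_\infty$ and the non-symmetric Euclidean diagrams, leaving exactly the simply laced diagrams together with the doubly bonded Euclidean diagram $\tilde{\mathbb{A}}_{12}$, i.e.\ precisely the lists (a), (b) and (c). The main obstacle is this last core: establishing the subadditive function genuinely on the tree class $T$ (the descent through $\Z T/\Pi$) and then running the Happel--Preiser--Ringel/Vinberg argument together with the valuation-symmetry that removes the multiply laced types; by contrast, the reductions of the first two paragraphs are essentially formal once Riedtmann's theorem and the existence of almost split sequences are in hand.
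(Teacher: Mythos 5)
The paper offers no proof of this statement to compare against: it is a survey, and the theorem is quoted verbatim from Webb's paper \cite{W} (with the generalisation \cite[Theorem 5.6]{EHSST} stated immediately afterwards), so your proposal can only be judged on its own terms. On those terms it is essentially correct, and it is in substance the standard (indeed Webb's) argument as presented in the literature: Riedtmann's structure theorem writes a component $\Theta$ as $\Z T/\Pi$; since $KG$ is symmetric, almost split sequences exist for all non-projective indecomposables and $\tau\cong\Omega^2$; the sequence $0\to\tau M\to E\to M\to 0$ gives $\dim_K M+\dim_K\tau M\geq\sum a_{XM}\dim_K X$ with the defect of equality accounted for exactly by deleted projective middle summands, so $\dim_K$ is subadditive on $\Theta$; the descent to a subadditive function on $T$ is precisely the content of the application part of Happel--Preiser--Ringel, and their Vinberg-type classification yields the Dynkin/Euclidean/infinite-Dynkin trichotomy. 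Your handling of the valuations is also right: over the algebraically closed field $K$ both valuations of an arrow equal the dimension over $K$ of the space of irreducible maps, so only symmetric valuations occur, which removes the multiply laced types and leaves the simply laced diagrams together with $\tilde{\mathbb A}_{12}$, the single edge with valuation $(2,2)$ (realised, for instance, by the component of the trivial module for the Klein four group in characteristic $2$).

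Two cautions, neither fatal. First, the statement as given concerns the \emph{orbit graph}, which you identify by fiat with the tree class $T$; when $\Pi$ contains automorphisms acting nontrivially on $T$ (not just powers of $\tau$), the orbit graph is a quotient of $T$ and a short supplementary argument, present in Webb's paper, is needed to pass from the tree class to the orbit graph. This is the one step your sketch elides rather than delegates to a citation. Second, your finer case division (additive gives Euclidean; strictly subadditive gives finite Dynkin or $A_\infty$ according to boundedness) is slightly coarser than what Happel--Preiser--Ringel prove --- additive functions also occur on the infinite Dynkin diagrams, e.g.\ a constant function on ${\mathbb A}^\infty_\infty$ --- but since all three families belong to the stated list, this imprecision does not affect the conclusion.
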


\begin{thm}\label{thm:webb} (\cite[Theorem 5.6]{EHSST})
Suppose that $\Lambda$ and $H$ satisfy {\bf (Fg1)} and {\bf (Fg2)}
and that $\L$ is selfinjective. Suppose that the Nakayama functor is
of finite order on any indecomposable module in $\mod \L$. Then the
tree class of a component of the stable Auslander-Reiten quiver of
$\Lambda$ is one of the following:
\begin{enumerate}
\item[(a)] a finite Dynkin diagram $({\mathbb A}_n, {\mathbb D}_n, {\mathbb E}_{6,7,8})$,
\item[(b)] a Euclidean diagram $(\tilde{{\mathbb A}}_n, \tilde{{\mathbb
D}}_n, \tilde{{\mathbb E}}_{6,7,8}, \tilde{{\mathbb A}}_{12})$, or
\item[(c)] an infinite Dynkin diagram of
type ${\mathbb A}_\infty$, ${\mathbb D}_\infty$ or ${\mathbb
A}^\infty_\infty$.
\end{enumerate}
\end{thm}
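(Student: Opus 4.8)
The plan is to combine Riedtmann's structure theorem for stable translation quivers with the Happel--Preiser--Ringel classification of subadditive functions on trees. First I would invoke Riedtmann's theorem: any connected component $\Theta$ of the stable Auslander--Reiten quiver of $\L$ is isomorphic to $\mathbb{Z}T/\Pi$, where $T$ is a directed tree (the tree class) and $\Pi$ is an admissible group of automorphisms. It therefore suffices to show that $T$ appears in the stated list, and by the theorem of Happel, Preiser and Ringel this will follow once I exhibit a positive integer-valued subadditive function on $T$ (equivalently, on $\mathbb{Z}T$) whose growth in the translation direction is subexponential.

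The natural candidate is the $K$-dimension. For each almost split sequence $0\to \tau M\to E\to M\to 0$ with $M$ in $\Theta$, the middle term $E$ may contain a projective--injective summand $P$, which is deleted on passing to the stable quiver; discarding it gives $\dim_K \tau M + \dim_K M = \dim_K E \geq \sum_{Y\to M}\dim_K Y$, the sum taken over the arrows of $\Theta$ ending at $M$. Hence $d(M)=\dim_K M$ is a positive subadditive function on $\Theta$, and it lifts to a $\Pi$-invariant subadditive function on $\mathbb{Z}T$.

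To make this usable on the tree $T$ I must control the behaviour of $d$ along $\tau$-orbits, and this is exactly where the two hypotheses enter. Since $\L$ is selfinjective we have $\tau\cong \nu\,\Omega_\L^{2}$ on $\underline{\mod}\,\L$; the assumption that the Nakayama functor $\nu$ has finite order, say $\nu^{m}\cong\id$ on the indecomposables of $\Theta$, then yields $\tau^{m}\cong\Omega_\L^{2m}$. On the other hand, conditions {\bf (Fg1)} and {\bf (Fg2)} force $\Ext^{*}_\L(\L/\rrad,\L/\rrad)$ to be a finitely generated module over the Noetherian ring $H$, so every module has finite complexity and $\dim_K\Omega_\L^{n}M$ is bounded by a polynomial in $n$. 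Combining these, $\dim_K\tau^{mk}M=\dim_K\Omega_\L^{2mk}M$ grows at most polynomially in $k$, so the lifted function $d$ has subexponential growth along the translation direction of $\mathbb{Z}T$.

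The hard part will be this growth estimate together with the passage to the tree: I expect the real work to be verifying that subexponential growth of a positive subadditive function on $\mathbb{Z}T$ already excludes every tree except the finite and infinite Dynkin diagrams and the Euclidean diagrams. The mechanism is that on any larger tree --- one with a vertex of too high valency, or with branches that are too long --- a positive subadditive function is forced to grow exponentially in the translation direction, contradicting the polynomial bound supplied by finite complexity. Once this dichotomy is established, the Happel--Preiser--Ringel classification identifies $T$ with one of the diagrams listed in (a), (b) or (c), which completes the argument.
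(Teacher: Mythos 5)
You should note first that the survey itself contains no proof of this theorem: it is quoted from \cite[Theorem 5.6]{EHSST}, so the only meaningful comparison is with the proof given in that cited paper. Your outline follows that proof's strategy essentially step for step: Riedtmann's structure theorem $\Theta \cong \mathbb{Z}T/\Pi$; $\dim_K$ as a positive subadditive function on the component, coming from almost split sequences with projective summands deleted; selfinjectivity giving $\tau \cong \nu\Omega_\L^2$ with $\nu$ and $\Omega_\L$ commuting, so that the finite order of $\nu$ on each indecomposable yields $\tau^{mk}M \cong \Omega_\L^{2mk}M$ along each orbit; the conditions {\bf (Fg1)} and {\bf (Fg2)} making the relevant Ext-groups finitely generated over the Noetherian ring $H$, hence giving finite complexity and a polynomial bound on $\dim_K \Omega_\L^{n}M$; and finally the Happel--Preiser--Ringel classification. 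The one step you explicitly leave open --- that a positive subadditive function on $\mathbb{Z}T$ with subexponential growth in the translation direction forces $T$ into the stated list --- is precisely Webb's averaging argument: one sums $d$ along initial segments of $\tau$-orbits and normalises, the polynomial bound guaranteeing a finite positive limit that is a genuine Happel--Preiser--Ringel subadditive function on $T$, while any tree outside the list has adjacency spectral radius exceeding $2$ and so forces exponential growth. This lemma is standard (it is the heart of Webb's original proof for group algebras, and \cite{EHSST} adapt it rather than reprove it), so your proposal is correct and coincides in approach with the cited proof, modulo citing or writing out that combinatorial step.
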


We remark that the hypotheses of Theorem \ref{thm:webb} are
satisfied for all finite-dimensional cocommutative Hopf algebras
(\cite[Corollary 5.7]{EHSST}).

For more information, the reader should also see the survey paper on support
varieties for modules and complexes by Solberg \cite{S}. In addition, the
paper by Bergh \cite{Bergh} introduces the concept of a twisted
support variety for a finitely generated module over an artin algebra, where
the twist is induced by an automorphism of the algebra, and, in \cite{BS},
Bergh and Solberg study relative support varieties for finitely generated
modules over a finite-dimensional algebra over a field.

\section{$\Omega$-periodic algebras}

We now turn our attention to the structure of the Hochschild
cohomology ring. One class of algebras where it is relatively
straightforward to determine the structure of the Hochschild
cohomology ring explicitly is the class of $\Omega$-periodic
algebras. We recall that $\L$ is said to be an $\Omega$-periodic
algebra if there exists some $n \geq 1$ such that
$\Omega^n_{\L^e}(\L) \cong \L$ as bimodules. Such an algebra $\L$
has a periodic minimal projective bimodule resolution, so that
$\HH^i(\L) \cong \HH^{n+i}(\L)$ for $i \geq 1$, and is necessarily
self-injective (Butler; see \cite{GSS}).

There is an extensive survey of periodic algebras by Erdmann and
Skowro\'nski in \cite{ESk}. Examples of such algebras include
the preprojective algebras of
Dynkin type where $\Omega^6_{\L^e}(\L) \cong \L$ as bimodules
(\cite{RS}; see also \cite{ES}), and the deformed mesh
algebras of generalized Dynkin type of Bia\l kowski, Erdmann and
Skowro\'nski \cite{BES, ESk}.
For the selfinjective algebras of finite
representation type over an algebraically closed field, it is known
from \cite{GSS} that there is some $n \geq 1$ and automorphism
$\sigma$ of $\L$ such that $\Omega^n_{\L^e}(\L)$ is isomorphic as a
bimodule to the twisted bimodule ${}_1\L_{\sigma}$. It has now been
shown that all selfinjective algebras of finite
representation type over an algebraically closed field are
$\Omega$-periodic (\cite{D,EH,EHS,ESk}).

The structure of the Hochschild cohomology ring modulo nilpotence of
these algebras was determined by Green, Snashall and Solberg in \cite{GSS}.

\begin{thm}(\cite[Theorem 1.6]{GSS}) Let $K$ be an algebraically closed field.
Let $\L$ be a finite-dimensional indecomposable $K$-algebra such that there
is some $n \geq 1$ and some automorphism
$\sigma$ of $\L$ such that $\Omega^n_{\L^e}(\L)$ is isomorphic to the
twisted bimodule ${}_1\L_{\sigma}$. Then
$$\HH^*(\Lambda)/\N \cong \left \{ \begin{array}{ll}K[x] \mbox{ or } \\
K.\end{array}\right.$$ If there is some $m \geq 1$ such that
$\Omega^m_{\L^e}(\L) \cong \L$ as bimodules, then
$\HH^*(\Lambda)/\N \cong K[x]$, where $x$ is in degree $m$ and $m$ is
minimal.
\end{thm}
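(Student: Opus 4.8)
The plan is to exploit the periodicity of the minimal projective bimodule resolution of $\L$ to compute $\HH^*(\L)$ explicitly in high degrees, and then to identify which elements are nilpotent. The hypothesis $\Omega^n_{\L^e}(\L) \cong {}_1\L_{\sigma}$ means that the minimal projective $\L^e$-resolution of $\L$ is eventually periodic with period $n$, the shift being twisted by the automorphism $\sigma$. First I would fix this minimal resolution and record that for $i \geq 1$ one has $\Omega^{n+i}_{\L^e}(\L) \cong {}_1(\Omega^i_{\L^e}(\L))_{\sigma}$, so that $\HH^i(\L) \cong \Ext^i_{\L^e}(\L,\L)$ is (up to the twist by $\sigma$) isomorphic to $\HH^{n+i}(\L)$. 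This gives a degree-$n$ self-map of $\HH^*(\L)$; the main point is to understand it as multiplication by a distinguished element.

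Second, I would produce the candidate generator. The isomorphism $\Omega^n_{\L^e}(\L) \cong {}_1\L_{\sigma}$ yields, via the splicing/Yoneda description of $\Ext$ recalled in the introduction, a specific class $x \in \HH^n(\L) = \Ext^n_{\L^e}(\L,\L)$, namely the one corresponding to the $n$-fold exact sequence obtained from the first $n$ syzygies together with this isomorphism. The key step is then to show that Yoneda multiplication by $x$ realizes the periodicity isomorphism, i.e. that $x \cdot (-) \colon \HH^i(\L) \to \HH^{n+i}(\L)$ agrees (again up to the $\sigma$-twist) with the periodicity identification from the first paragraph, and hence is an isomorphism for all $i \geq 1$. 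From this I would deduce that $\HH^{\geq 1}(\L)$ is generated over $\HH^0(\L)$ by $x$ in the sense that every class in degree $\geq 1$ is, modulo lower-degree phenomena, an $\HH^0$-multiple of a power of $x$.

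Third, I would pin down $\N$ and the quotient. One shows that all elements of $\HH^0(\L) = Z(\L)$ of positive radical degree are nilpotent (as $Z(\L)$ is a finite-dimensional commutative $K$-algebra, $\rad Z(\L)$ is nilpotent) and, more delicately, that every homogeneous class of positive degree not an $H^0$-multiple of a power of $x$ is nilpotent, while $x$ itself is \emph{not} nilpotent precisely when the twist can be trivialized on cohomology. Concretely, if some $\Omega^m_{\L^e}(\L) \cong \L$ untwisted, then the corresponding $x \in \HH^m(\L)$ multiplies to give genuine isomorphisms $\HH^{i} \xrightarrow{\sim} \HH^{m+i}$ compatibly with the ring structure, so no power of $x$ vanishes and $\HH^*(\L)/\N \cong K[x]$ with $x$ in degree $m$; choosing $m$ minimal makes $x$ a genuine polynomial generator. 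In the remaining case, where no untwisted periodicity occurs, the twist forces every positive-degree class to be nilpotent and $\HH^*(\L)/\N \cong K$.

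The hard part will be controlling the automorphism $\sigma$ and showing the precise dichotomy: distinguishing when the $\sigma$-twist can be absorbed (so that multiplication by the periodicity class is nilpotence-free and yields the polynomial generator) from when it genuinely obstructs non-nilpotence of every positive-degree element. This requires tracking how $\sigma$ acts on the cohomology classes under Yoneda composition and verifying that the induced action on $\HH^*(\L)/\N$ is what dictates the two possible outcomes $K[x]$ or $K$; the graded-commutativity recalled in the introduction must also be invoked to ensure the relevant powers of $x$ behave as in a genuine polynomial ring rather than an exterior factor.
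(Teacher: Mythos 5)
Your overall dichotomy is the right statement, but the engine you propose for it fails at the first step, and the failure is visible in an example quoted in this very survey. From $\Omega^n_{\L^e}(\L) \cong {}_1\L_{\sigma}$ you do \emph{not} get isomorphisms $\HH^i(\L) \cong \HH^{n+i}(\L)$ ``up to the twist'': dimension shifting gives $\HH^{n+i}(\L) \cong \Ext^i_{\L^e}({}_1\L_{\sigma},\L)$, that is, Hochschild cohomology with \emph{twisted} coefficients, and these groups can differ drastically from $\HH^i(\L)$. The algebra $\L_q$ of Buchweitz, Green, Madsen and Solberg recalled in Section 3 (see \cite{BGMS}) satisfies exactly this kind of twisted periodicity (of period $2$), yet $\HH^i(\L_q)=0$ for $i\geq 3$ while $\HH^*(\L_q)$ is nonzero in low positive degrees; so no class in $\HH^n(\L)$ can induce isomorphisms $\HH^i(\L)\to\HH^{n+i}(\L)$ for all $i\geq 1$, as your second paragraph asserts. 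Worse, your ``candidate generator'' does not exist in the twisted case: splicing the truncated resolution with the isomorphism ${}_1\L_{\sigma}\cong\Omega^n_{\L^e}(\L)$ produces a class in $\Ext^n_{\L^e}(\L,{}_1\L_{\sigma})$, not in $\HH^n(\L)=\Ext^n_{\L^e}(\L,\L)$; to land in $\HH^n(\L)$ you would need a bimodule homomorphism ${}_1\L_{\sigma}\to\L$, which in general need not exist, and ${}_1\L_{\sigma}\cong\L$ as bimodules only when $\sigma$ is inner. Consequently the claim that every positive-degree class is an $\HH^0$-multiple of a power of $x$ is unsupported, and the whole third paragraph collapses with it.

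Note also that this survey contains no proof of the theorem --- it is quoted from \cite{GSS} --- and the argument there runs in the opposite direction to yours. Instead of building a generator out of the periodicity, one starts from an \emph{arbitrary} homogeneous class $\eta\in\HH^m(\L)$ with $m\geq 1$, represented by a bimodule map $\eta\colon\Omega^m_{\L^e}(\L)\to\L$, and shows that if $\Im\eta\subseteq\rrad\L$ then $\eta$ is nilpotent, since lifted composites of such maps have images in increasing powers of the radical, which is nilpotent. Hence a non-nilpotent positive-degree class yields a surjective bimodule map $\Omega^m_{\L^e}(\L)\to\L$, and the twisted periodicity (which controls the syzygies $\Omega^m_{\L^e}(\L)$ up to twisted bimodules ${}_1\L_{\sigma^k}$, all of dimension $\dim_K\L$ in the relevant degrees) forces such a surjection to be an isomorphism: non-nilpotence \emph{produces} an untwisted periodicity $\Omega^m_{\L^e}(\L)\cong\L$, rather than the other way round. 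The dichotomy then falls out cleanly: if no untwisted period exists, every positive-degree class is nilpotent and, since $\HH^0(\L)=Z(\L)$ is local because $\L$ is indecomposable, $\HH^*(\L)/\N\cong K$; if $m$ is the minimal untwisted period, the non-nilpotent classes are concentrated in degrees $jm$, where $\Hom_{\L^e}(\Omega^{jm}_{\L^e}(\L),\L)\cong Z(\L)$ is local, giving exactly one dimension modulo nilpotents in each such degree and $\HH^*(\L)/\N\cong K[x]$ with $x$ in degree $m$. Your treatment of the untwisted case is essentially this last step and is the only part of your plan that survives; the genuinely missing idea is the radical-image criterion for nilpotence, which is what lets one handle the twist without ever needing to ``absorb'' it.
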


Additional information on the ring structure of the Hochschild
cohomology ring of the preprojective algebras of Dynkin type $A_n$
was determined in \cite{ES}, of the preprojective algebras of Dynkin
types $D_n, E_6, E_7, E_8$ in \cite{Eu}, and of the selfinjective
algebras of finite representation type $A_n$ over an algebraically
closed field in \cite{EH, EHS}.

\bigskip

Given that the Hochschild cohomology ring of these algebras is understood,
this naturally leads to the study of situations where the Hochschild
cohomology rings of two algebras $A$ and $B$ can be related. This enables us
to transfer information about the Hochschild cohomology ring of, say, an
$\Omega$-periodic algebra, to other algebras. Apart from periodic algebras,
there are other algebras where the Hochschild cohomology ring is known, and
these provide additional examples where the transfer of properties between
Hochschild cohomology rings may also be studied. One such class of examples
is the class of truncated quiver algebras, which has been extensively
studied in the literature by many authors.

Happel showed in \cite[Theorem 5.3]{Ha} that if $B$ is a one-point extension of
a finite-dimensional $K$-algebra $A$ by a finitely generated $A$-module $M$,
then there is a long exact sequence connecting the Hochschild cohomology
rings of $A$ and $B$:
$$0\to\HH^0(B) \to \HH^0(A) \to \Hom_A(M,M)/K \to$$
$$\HH^1(B) \to \HH^1(A) \to \Ext^1_A(M,M)\to \cdots$$
$$\cdots \to \Ext^{i}_A(M,M) \to \HH^{i+1}(B) \to \HH^{i+1}(A) \to
\Ext^{i+1}_A(M,M) \to \cdots$$
It was subsequently shown by Green, Marcos and Snashall in
\cite[Theorem 5.1]{GMS} that there is a graded ring homomorphism
$$\HH^*(B) \to \HH^*(A)\oplus {\mathcal K}$$
which induces this long exact sequence,
where
${\mathcal K}$ is the graded $K$-module with ${\mathcal K}_0 = K$ and
${\mathcal K}_n = 0$ for all $n \neq 0$.

These results were generalized independently to arbitrary triangular matrix
algebras by Cibils \cite{C}, by Green and Solberg \cite{GSo}, and by
Michelena and Platzeck \cite{MP}.

\bigskip

A recent result of K\"onig and Nagase, (\cite{KN, N}), has related the
Hochschild cohomology ring of $B$ to that of $B/BeB$ in the case where $B$ is
an algebra with idempotent $e$, such that $BeB$ is a stratifying
ideal of $B$.

\begin{thm}(\cite{KN})
Let $B$ be an algebra with idempotent $e$ such that $BeB$ is a
stratifying ideal of $B$ and let $A$ be the factor algebra $B/BeB$.
Then there are long exact sequences as follows:
\begin{enumerate}
\item $\cdots\rightarrow \Ext^n_{B^e}(B,BeB)
\rightarrow \HH^n(B) \rightarrow \HH^n(A) \rightarrow\cdots ;$
\item $\cdots\rightarrow \Ext^n_{B^e}(A,B)
\rightarrow \HH^n(B) \rightarrow \HH^n(eBe) \rightarrow\cdots ; and$
\item $\cdots\rightarrow \Ext^n_{B^e}(A,BeB)
\rightarrow \HH^n(B) \rightarrow \HH^n(A) \oplus
\HH^n(eBe)\rightarrow\cdots .$
\end{enumerate}
\end{thm}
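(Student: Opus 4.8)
The plan is to build the three long exact sequences from relative homological algebra applied to the stratifying ideal $BeB$. Writing $A = B/BeB$, the key structural input is the defining property of a stratifying ideal: the multiplication map $Be\otimes_{eBe}eB \to BeB$ is an isomorphism and $\operatorname{Tor}^{eBe}_i(Be, eB) = 0$ for $i > 0$, which guarantees that the functor $-\otimes_A^{\mathbb L}B$ and its companions are well-behaved, and in particular that $\operatorname{Ext}^*_{A^e}(A,A) \cong \operatorname{Ext}^*_{B^e}(B,A)$ via the recollement data. First I would set up the recollement of derived categories $D(A) \to D(B) \to D(eBe)$ attached to the idempotent $e$, with the six standard functors; the stratifying hypothesis is exactly what makes the relevant functors fully faithful on the relevant subcategories.

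The next step is to apply $\operatorname{RHom}_{B^e}(B, -)$ to the short exact sequence of $B,B$-bimodules
\begin{equation*}
0 \to BeB \to B \to A \to 0
\end{equation*}
and read off the long exact sequence in cohomology. Since $\HH^n(B) = \operatorname{Ext}^n_{B^e}(B,B)$ by definition, the middle and outer terms become $\operatorname{Ext}^n_{B^e}(B, BeB)$, $\HH^n(B)$, and $\operatorname{Ext}^n_{B^e}(B,A)$ respectively. To obtain sequence (1) in the stated form I would then identify $\operatorname{Ext}^n_{B^e}(B,A)$ with $\HH^n(A)$; this identification is precisely where the stratifying hypothesis is used, via a change-of-rings argument reducing computations over $B^e$ to computations over $A^e$. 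Sequence (2) is obtained analogously, starting instead from a short exact sequence whose cokernel involves $eBe$, and again using that $\operatorname{Ext}^n_{B^e}(A,B) $ and $\HH^n(eBe)$ fit into the appropriate recollement, with $\HH^n(eBe) = \operatorname{Ext}^n_{(eBe)^e}(eBe, eBe)$ recovered through the Schur functor $M \mapsto eM$.

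Sequence (3) should then follow by combining (1) and (2): I would assemble a commutative diagram whose rows are the first two long exact sequences and deduce the third by a mapping-cone or octahedral argument, so that the term $\operatorname{Ext}^n_{B^e}(A, BeB)$ appears as the fibre of the map $\HH^n(B) \to \HH^n(A)\oplus\HH^n(eBe)$. The main obstacle, and the step deserving the most care, is the identification of $\operatorname{Ext}^*_{B^e}(B,A)$ with $\HH^*(A)$ and of the corresponding $eBe$-term: this is a change-of-rings statement that genuinely requires the $\operatorname{Tor}$-vanishing in the definition of a stratifying ideal, not merely that $BeB$ be an idempotent ideal. I expect the naturality of all the connecting maps — needed to make the octahedral step in (3) rigorous — to require a uniform derived-category formulation rather than an ad hoc splicing of $\operatorname{Ext}$ sequences.
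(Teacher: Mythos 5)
The survey you are working from states this theorem without proof, citing K\"onig and Nagase \cite{KN}, so your attempt can only be compared with the argument in that source; your overall architecture does match it. Sequence (1) is set up correctly: apply $\Hom_{B^e}(B,-)$ (or $\operatorname{RHom}_{B^e}(B,-)$) to the bimodule sequence $0 \to BeB \to B \to A \to 0$ and then identify the third term with $\HH^n(A)$, which is indeed where the stratifying hypothesis enters. The full recollement machinery you invoke is heavier than necessary, but it is not wrong.

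There are, however, two concrete defects. First, your derivation of sequence (2) ``starting instead from a short exact sequence whose cokernel involves $eBe$'' cannot be carried out: $eBe$ is an $eBe$-bimodule, not a $B$-bimodule, and there is no exact sequence of $B^e$-modules ending in it. Sequence (2) comes from the \emph{same} sequence $0 \to BeB \to B \to A \to 0$, with $\Hom_{B^e}(-,B)$ applied in the contravariant variable, giving $\cdots \to \Ext^n_{B^e}(A,B) \to \HH^n(B) \to \Ext^n_{B^e}(BeB,B) \to \cdots$; the stratifying hypothesis ($BeB \cong Be \otimes^{{\mathbb L}}_{eBe} eB$, with $Be$ and $eB$ projective as one-sided $B$-modules) then yields the adjunction $\operatorname{RHom}_{B^e}\bigl(Be \otimes^{{\mathbb L}}_{eBe} eB,\, B\bigr) \simeq \operatorname{RHom}_{(eBe)^e}(eBe, eBe)$, whence $\Ext^n_{B^e}(BeB,B) \cong \HH^n(eBe)$. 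Second, you never isolate the vanishing on which both remaining identifications hinge: $\Ext^n_{B^e}(BeB, A) = 0$ for all $n$, which follows from the same adjunction because $eAe = 0$ (note $e \in BeB$). This vanishing is what upgrades $\Ext^n_{B^e}(B,A)$ to $\Ext^n_{B^e}(A,A) \cong \HH^n(A)$ in (1) (the last isomorphism being the homological-epimorphism statement you rightly flag, equivalent to $\operatorname{Tor}^B_i(A,A) = 0$ for $i>0$), and it is also exactly what kills the cross connecting map in your octahedral step for (3), so that the third vertex of the triangle splits as $\operatorname{RHom}_{B^e}(A,A) \oplus \operatorname{RHom}_{B^e}(BeB,BeB)$ rather than being a possibly non-split extension of the two. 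As written, your ``mapping-cone or octahedral argument'' for (3) is a hope rather than a proof; naming and proving this one vanishing lemma repairs both gaps.
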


\section{The Hochschild cohomology ring modulo nilpotence}\label{HHmodN}

The definition of a support variety in \cite{SS} led us to
consider the structure of $\HH^*(\L)/\N$ and to conjecture that
$\HH^*(\L)/\N$ is always finitely generated as an
algebra. A counterexample to this conjecture was recently given by Xu in
\cite{Xu}; nevertheless the Hochschild cohomology ring modulo
nilpotence is finitely generated as an algebra for many diverse
classes of algebras.

The Hochschild cohomology ring modulo nilpotence is known to be
finitely generated as an algebra in the following cases.
\begin{enumerate}
\item[$\bullet$] any block of a group ring of a finite group
(\cite{E,V});
\item[$\bullet$] any block of a finite-dimensional cocommutative Hopf
algebra (\cite{FS});
\item[$\bullet$] finite-dimensional selfinjective algebras of finite
representation type over an algebraically closed field (\cite{GSS});
\item[$\bullet$] finite-dimensional monomial algebras
(\cite{GSSmon} and see \cite{GS});
\item[$\bullet$] finite-dimensional algebras of finite global
dimension (see \cite{Ha}).
\end{enumerate}

For the last class of examples, if $\L$ is an algebra of finite
global dimension $N$, then $\HH^i(\L) = \Ext^i_{\L^e}(\L, \L) = (0)$
for all $i > N$. Hence $\HH^*(\L)/\N \cong K$. In \cite{Ha}, Happel
asked whether or not it was true, for a finite-dimensional algebra
$\Gamma$ over a field $K$, that if $\HH^n(\Gamma) = (0)$ for $n \gg
0$ then the global dimension of $\Gamma$ is finite. This question
has now been answered in the negative by Buchweitz, Green, Madsen and
Solberg in \cite{BGMS} by the
following example.

\begin{example} (\cite{BGMS})
Let $$\L_q = K\langle x, y\rangle/(x^2, xy+qyx, y^2)$$ with $q \in
K\setminus\{0\}$. If $q$ is not a root of unity then
$\dim\HH^i(\L_q) = 0$ for $i \geq 3$. Moreover, $\L_q$ is a
selfinjective algebra so has infinite global dimension.
\end{example}

We also note from \cite{BGMS} that $\dim\L_q = 4$, $\dim\HH^*(\L_q)
= 5$ and $\HH^*(\L)/\N \cong K$.

\bigskip

However, the situation for commutative algebras is very different, as
Avramov and Iyengar have shown.

\begin{thm} (\cite{AI})
Let $R$ be a commutative finite-dimensional $K$-algebra over a field
$K$. If $\HH^n(R) = (0)$ for $n \gg 0$ then $R$ is a (finite)
product of (finite) separable field extensions of $K$. In
particular, the global dimension of $R$ is finite.
\end{thm}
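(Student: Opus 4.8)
The plan is to deduce from the hypothesis that $R$ is \'etale over $K$, that is, that the cotangent complex $L_{R/K}$ is acyclic. A finite-dimensional commutative \'etale $K$-algebra is exactly a finite product of finite separable field extensions of $K$, and such an algebra is semisimple, hence of global dimension $0$; so the whole statement, including the final assertion, will follow. Since $\HH^*$ is compatible with finite products and every commutative Artinian $K$-algebra is a finite product of local Artinian algebras, I would first reduce to the case that $R$ is local. The content to be extracted is then a \emph{rigidity} phenomenon: eventual vanishing of Hochschild cohomology must propagate downward, forcing $L_{R/K}$ to vanish entirely. This is precisely what fails for noncommutative algebras, as the example $\L_q$ above shows.

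Next I would pass from $\HH^*(R)$ to the cotangent complex and Andr\'e-Quillen (co)homology $D_*(R|K;-)$, $D^*(R|K;-)$. In characteristic $0$ the Hochschild-Kostant-Rosenberg decomposition already writes $\HH_*(R)$ as a sum of exterior powers of $\Omega_{R/K}$; in general one has Quillen's spectral sequence whose associated graded is assembled from the derived symmetric (respectively divided) powers of $L_{R/K}[1]$. Because $R$ is finite-dimensional, duality lets me move freely between the homological and cohomological versions, and the comparison turns the hypothesis into the statement that $D^n(R|K;R)=0$ for $n\gg 0$, equivalently $\pd_R L_{R/K}<\infty$.

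At this point I would invoke the deep structural input of Avramov's theorem resolving Quillen's conjecture: for $R$ essentially of finite type over the field $K$, finiteness of $\pd_R L_{R/K}$ forces $K\to R$ to be a complete intersection, so that $D_n(R|K;-)=0$ for all $n\geq 2$. What remains is to kill the degree-one term $D_1$ (together with $\Omega_{R/K}=D_0$), that is, to prove $L_{R/K}$ is acyclic. This is the main obstacle. The heart of the matter is that a nonzero low-degree Andr\'e-Quillen class cannot be isolated: through the graded-commutative divided-power structure carried by Hochschild cohomology it generates a polynomial (characteristic $0$) or divided-power (characteristic $p$) subalgebra, and so produces nonzero classes in arbitrarily large degrees, contradicting eventual vanishing. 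The effect is visible already in $R=K[x]/(x^2)$, a non-smooth complete intersection whose Hochschild cohomology is $\Omega$-periodic, hence nonzero in every degree. Making this propagation precise in positive characteristic, where the Hochschild-Kostant-Rosenberg decomposition is unavailable and one must instead control the derived divided powers of $L_{R/K}$ using the complete-intersection structure, is the crux; it forces $D_1=0$, whence $R$ is smooth over $K$ and, being finite-dimensional, \'etale. The cotangent complex is then acyclic and the theorem follows.
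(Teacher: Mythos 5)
You should know at the outset that the survey you are comparing against contains no proof of this statement: it is quoted verbatim from Avramov and Iyengar \cite{AI}, so the only meaningful comparison is with their original argument. Measured against that, your road map has the right skeleton and is essentially the route they take: reduce to $R$ local Artinian, relate Hochschild cohomology to Andr\'e--Quillen (co)homology through Quillen's fundamental spectral sequence built from the derived divided/symmetric powers of $L_{R/K}[1]$, invoke Avramov's solution of Quillen's conjecture to conclude that $K\to R$ is a complete intersection, rule out non-smooth complete intersections by showing that a nonzero low-degree cotangent class forces nonvanishing cohomology in arbitrarily high degrees, and finish with the standard identification of smooth finite-dimensional commutative $K$-algebras with finite products of finite separable field extensions (hence semisimple, hence of global dimension zero). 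The example $K[x]/(x^2)$ is indeed the right touchstone.

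However, as a proof your text has two genuine gaps, and together they constitute essentially all of the technical content of \cite{AI}. First, the passage from $\HH^n(R)=(0)$ for $n\gg 0$ to vanishing of Andr\'e--Quillen cohomology is not formal in characteristic $p$: Quillen's spectral sequence does not degenerate there, so vanishing of the abutment along a stretch of degrees controls only certain pieces of the associated graded, and extracting vanishing of individual Andr\'e--Quillen groups requires exactly the delicate degree bookkeeping that gives the cited paper its title. Your appeal to duality is also looser than it looks: for finite-dimensional $R$ duality relates $\HH_n(R)$ to cohomology with coefficients in $\D R$, not in $R$, and unless $R$ is Gorenstein these are genuinely different bimodule coefficients; likewise your asserted equivalence of $D^n(R|K;R)=0$ for $n\gg 0$ with $\pd_R L_{R/K}<\infty$ is not something you may assume (only one implication is needed, and it is itself a theorem). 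Second, and more seriously, the step you yourself label the crux --- that a non-smooth local complete intersection has nonvanishing Hochschild cohomology in infinitely many degrees --- is asserted via ``the divided-power structure \dots\ produces nonzero classes in arbitrarily large degrees'' without any mechanism; a nilpotent class generates nothing by itself, and the actual argument rests on the structure theory of complete intersections (Gulliksen's theorem, the free divided-power behavior controlling the homotopy Lie algebra and its action on cohomology), whose characteristic-$p$ implementation is the heart of \cite{AI}. So what you have written is a correct and well-informed plan of the known proof with its two hard steps left as black boxes, not a proof.
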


We now turn to a brief discussion of the
Hochschild cohomology ring modulo nilpotence for a monomial algebra, which
was studied by Green, Snashall and Solberg.
Let $\L$ be a quotient of a path algebra so that $\L=K\mathcal{Q}/I$
for some quiver ${\mathcal Q}$ and admissible ideal $I$ of
$K\mathcal{Q}$. Then $\L=K\mathcal{Q}/I$ is a monomial algebra if
the ideal $I$ is generated by monomials of length at least two. It
should be noted that monomial algebras are very rarely selfinjective
and so do not usually exhibit the same properties as group algebras.
However, the Hochschild cohomology ring modulo nilpotence of a
monomial algebra turns out to have a particularly nice structure.

\begin{thm} (\cite[Theorem 7.1]{GSSmon})
Let $\L = K\mathcal{Q}/I$ be a finite-dimensional indecomposable
monomial algebra.
Then $\HH^*(\L)/\N$ is a commutative finitely generated $K$-algebra
of Krull dimension at most one.
\end{thm}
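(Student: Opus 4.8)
The commutativity is already in hand: as noted in the introduction, $\HH^*(\L)$ is graded-commutative, so every odd-degree element squares to zero and hence lies in $\N$, while the even-degree elements commute; thus $\HH^*(\L)/\N$ is automatically a commutative $K$-algebra. The substance of the theorem is therefore the finite generation and the bound on Krull dimension, and for these the plan is to compute $\HH^*(\L)$ combinatorially and then isolate the part that survives modulo $\N$.

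First I would take Bardzell's minimal projective bimodule resolution of $\L$ over $\L^e$, which for a monomial algebra is completely explicit: in each degree $n$ the projective $P^n$ is a finite direct sum of bimodules $\L e_{o(p)}\otimes_K e_{t(p)}\L$ indexed by a combinatorial set $AP^n$ of ``overlap'' paths assembled from the relations. Applying $\Hom_{\L^e}(-,\L)$ and using the standard isomorphism $\Hom_{\L^e}(\L e_{o(p)}\otimes_K e_{t(p)}\L,\L)\cong e_{o(p)}\L e_{t(p)}$, I obtain a cochain complex whose $n$-th term has a $K$-basis given by pairs $(p,q)$, where $p\in AP^n$ and $q$ is a nonzero path in $\L$ parallel to $p$. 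Taking cohomology yields an explicit basis for each $\HH^n(\L)$, together with a combinatorial description of the Yoneda product as a splicing/concatenation of the overlap data of the underlying paths.

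The key step is to characterize $\N$ on this basis. I would show that a basis class $(p,q)$ is non-nilpotent only when $q$ is a power of a single cyclic path $c$ at a vertex and $p$ is a compatible power in the overlap structure tracking $c$; every other class becomes nilpotent because iterating its product drives the parallel path through a relation, so that it vanishes in $\L$. Since there are only finitely many vertices and hence finitely many admissible cyclic paths, this immediately gives that $\HH^*(\L)/\N$ is generated as a $K$-algebra by finitely many elements, essentially one for each such cycle.

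The Krull dimension bound is the part I expect to be the real obstacle. The point to establish is that the product of two generators coming from \emph{distinct} cyclic paths is always nilpotent: concatenating the overlap data of two different cycles produces a parallel path that is not a power of either cycle, so after finitely many steps it meets a relation and dies. Granting this, modulo $\N$ the distinct generators annihilate one another, so $\HH^*(\L)/\N$ is a quotient of a polynomial ring $K[x_1,\dots,x_k]$ by the cross-products $x_ix_j$ with $i\neq j$ — the coordinate ring of a union of lines through the origin — whose Krull dimension is one (or zero, when there are no such generators and the ring reduces to $K$). The delicate combinatorics lies exactly in this cross-product vanishing: one must analyze how the relation-overlaps attached to two cycles can align, and rule out any genuine mixing that manages to avoid $I$.
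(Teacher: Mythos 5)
Your outline does track the actual proof: the survey states this theorem without proof, deferring to \cite{GSSmon}, and that paper indeed proceeds via Bardzell's minimal bimodule resolution \cite{Ba}, with cochains in $\Hom_{\L^e}(P^n,\L)$ encoded as pairs $(p,q)$ where $p\in\R^n$ and $q$ is a parallel path, a reduction of everything modulo $\N$ to classes attached to cycles, and an orthogonality argument for distinct cycles. But two of your steps have genuine gaps. The first is the claim that ``taking cohomology yields an explicit basis for each $\HH^n(\L)$'': the differentials of the complex $\Hom_{\L^e}(P^*,\L)$ are not zero for a general monomial algebra, no closed-form basis of $\HH^n(\L)$ is known, and \cite{GSSmon} deliberately never computes $\HH^*(\L)$ --- it works at the cochain level, exhibits specific cocycles, and proves directly that every homogeneous element outside their span is nilpotent. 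Your plan as stated presupposes a computation that is not available.

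The second gap is decisive: your characterization of the non-nilpotent classes is inverted. If the parallel path $q$ has $\ell(q)\geq 1$, then (up to coboundaries) the $m$-th power of $(p,q)$ has parallel path $q^m$, which lies in $I$ for $m\gg 0$ because $\L$ is finite-dimensional; so the very mechanism you invoke to kill the ``other'' classes also kills your proposed generators with $q$ a positive power of a cycle. The classes that survive modulo $\N$ are instead the pairs $(p,e_v)$ whose parallel path is a \emph{trivial} path, with $p$ a power of a distinguished cycle assembled from relations (the ``tightly packed, extending'' cycles of \cite{GSSmon}; compare the $(D,A)$-stacked case \cite{GS}, where the generators $x_i$ are of exactly this form). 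For instance, for $\L=K[x]/(x^N)$ one has $\R^{2m}=\{x^{mN}\}$, every class $(x^{mN},x^j)$ with $j\geq 1$ is nilpotent, and $\HH^*(\L)/\N$ is generated by the degree-two class represented by $(x^N,e)$. Proving non-nilpotence of such classes --- that $p^s\in\R^{ns}$ for all $s$ and that these cocycles are never coboundaries --- is, together with the cross-product vanishing you rightly flag as delicate, the real content of \cite{GSSmon}, and it is absent from your argument. Two smaller points: finiteness of the generating set comes from finiteness of the relation set $\rho$ (a finite quiver has infinitely many cycles, so ``finitely many vertices hence finitely many cycles'' is not a valid inference); and in $\kar K=2$ odd-degree elements need not square to zero, though commutativity of $\HH^*(\L)/\N$ still holds since graded commutativity is then genuine commutativity, as noted in the introduction of the survey.
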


For some specific subclasses of monomial algebras, the structure of
the Hochschild cohomology ring modulo nilpotence was explicitly
determined in \cite{GS}. One of the main tools used was the minimal
projective bimodule resolution of a monomial algebra of Bardzell
\cite{Ba}. In order to define the particular class of $(D,A)$-stacked
monomial algebras, we require the concept of overlaps of \cite{GHZ,
GZ}; the definitions here use the notation of \cite{GS}.

\begin{defin}
A path $q$ in $K{\mathcal Q}$ overlaps a path $p$ in $K{\mathcal Q}$
with overlap $pu$ if there are paths
$u$ and $v$ such that $pu = vq$ and $1 \leq \ell(u) < \ell(q)$, where
$\ell(x)$ denotes the length of the path $x\in K{\mathcal Q}$. We
illustrate the definition with the following diagram. (Note that we
allow $\ell(v) = 0$ here.)
\[\xymatrix@W=0pt@M=0.3pt{
\ar@{^{|}-^{|}}@<-1.25ex>[rrr]_p\ar@{{<}-{>}}[r]^{v} &
\ar@{_{|}-_{|}}@<1.25ex>[rrr]^q & & \ar@{{<}-{>}}[r]_{u} & & }\]

A path $q$ properly overlaps a path $p$ with overlap $pu$ if $q$
overlaps $p$ and $\ell(v) \geq 1$.
\end{defin}

Let $\L = K{\mathcal Q}/I$ be a finite-dimensional monomial algebra
where $I$ has a minimal set of generators $\rho$ of paths of length
at least 2. We fix this set $\rho$ and now recursively define sets
$\R^n$ (contained in $K{\mathcal Q}$). Let
$$\begin{array}{lll}
\R^0 & = & \mbox {the set of vertices of ${\mathcal Q}$,}\\
\R^1 & = & \mbox{the set of arrows of ${\mathcal Q}$,}\\
\R^2 & = & \mbox{$\rho$.}
\end{array}$$
For $n \geq 3$, we say $R^2 \in \R^2$ maximally overlaps $R^{n-1}
\in \R^{n-1}$ with overlap $R^n = R^{n-1}u$ if
\begin{enumerate}
\item $R^{n-1} = R^{n-2}p$ for some path $p$;
\item $R^2$ overlaps $p$ with overlap $pu$;
\item there is no element of $\R^2$ which overlaps $p$ with overlap being a
proper prefix of $pu$.
\end{enumerate}
The set $\R^n$ is defined to be the set of all overlaps $R^n$ formed
in this way.

\bigskip

Each of the elements in $\R^n$ is a path in the quiver ${\mathcal
Q}$. We follow the convention that paths are written from left to
right. For an arrow $\alpha$ in the quiver ${\mathcal Q}$, we write
$\mo(\alpha)$ for the idempotent corresponding to the origin of
$\alpha$ and $\mt(\alpha)$ for the idempotent corresponding to the
tail of $\alpha$ so that $\alpha = \mo(\alpha)\alpha\mt(\alpha)$.
For a path $p = \alpha_1\alpha_2\cdots \alpha_m$, we write $\mo(p) =
\mo(\alpha_1)$ and $\mt(p) = \mt(\alpha_m)$.

The importance of these sets $\R^n$ lies in the fact that, for a
finite-dimensional monomial algebra $\L = K{\mathcal Q}/I$,
\cite{Ba} uses them to give an explicit construction of a minimal projective
bimodule resolution $(P^*, \delta^*)$ of $\L$, showing that
$$P^n = \oplus_{R^n \in\R^n}\L\mo(R^n)\otimes_K\mt(R^n)\L.$$

We now use these same sets $\R^n$ to define a $(D,A)$-stacked
monomial algebra.

\begin{defin}\label{definstacked}(\cite[Definition 3.1]{GS})
Let $\L = K\mathcal{Q}/I$ be a finite-dimensional monomial algebra,
where $I$ is an admissible ideal with minimal set of generators
$\rho$. Then $\L$ is said to be a $(D,A)$-stacked monomial algebra
if there is some $D \geq 2$ and $A \geq 1$ such that, for all $n
\geq 2$ and $R^n \in \R^n$,
$$\ell(R^n) = \left \{ \begin{array}{ll}
\frac{n}{2}D & \mbox{if $n$ even,}\\\\
\frac{(n-1)}{2}D + A & \mbox{if $n$ odd.}
\end{array} \right.$$
In particular all relations in $\rho$ are of length $D$.
\end{defin}

The class of $(D,A)$-stacked monomial algebras includes the Koszul
monomial algebras (equivalently, the quadratic monomial algebras)
and the $D$-Koszul monomial algebras of Berger (\cite{Ber}). Recall
that the Ext algebra $E(\L)$ of $\L$ is defined by $E(\L) =
\Ext^*_{\L}(\L/\rrad, \L/\rrad)$. It is well-known that the Ext
algebra of a Koszul algebra is generated in degrees 0 and 1;
moreover the Ext algebra of a $D$-Koszul algebra is generated in
degrees 0, 1 and 2 (\cite{GMMVZ}). It was shown by Green and Snashall in
\cite[Theorem 3.6]{GSext} that, for algebras of infinite
global dimension, the class of $(D,A)$-stacked monomial algebras is
precisely the class of monomial algebras $\L$ where each projective
module in the minimal projective resolution of $\L/\rrad$ as a right
$\L$-module is generated in a single degree and where the Ext
algebra of $\L$ is finitely generated as a $K$-algebra. It
was also shown,
for a $(D,A)$-stacked monomial algebra of infinite global dimension,
that the Ext algebra is generated in degrees 0, 1, 2 and 3.

\begin{thm}\cite{GS}
Let $\L = K\mathcal{Q}/I$ be a finite-dimensional $(D,A)$-stacked
monomial algebra, where $I$ is an admissible ideal with minimal set
of generators $\rho$. Suppose $\kar K \neq 2$ and $\gldim\L \geq 4$.
Then there is some integer $r \geq 0$ such that
$$\HH^*(\L)/\N \cong K[x_1, \ldots , x_r]/\langle x_ix_j \mbox{ for } i \neq
j\rangle.$$ Moreover the degrees of the $x_i$ and the value of the
parameter $r$ may be explicitly and easily calculated.
\end{thm}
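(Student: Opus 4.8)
The plan is to compute $\HH^*(\L)=\Ext^*_{\L^e}(\L,\L)$ directly from Bardzell's minimal projective bimodule resolution $(P^*,\delta^*)$ and then read off the ring structure modulo nilpotence. First I would apply $\Hom_{\L^e}(-,\L)$ to the resolution. Since $\Hom_{\L^e}(\L\mo(R^n)\otimes_K\mt(R^n)\L,\L)\cong \mo(R^n)\L\mt(R^n)$, the $n$-th cochain group is $\bigoplus_{R^n\in\R^n}\mo(R^n)\L\mt(R^n)$, which has a $K$-basis consisting of the nonzero paths $p$ in $\L$ that are \emph{parallel} to the indexing $R^n$, meaning $\mo(p)=\mo(R^n)$ and $\mt(p)=\mt(R^n)$. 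The induced differential has an explicit combinatorial description coming from $\delta^*$; the point of the $(D,A)$-stacked hypothesis is that $\ell(R^n)$ depends only on the parity of $n$ (being $\tfrac n2 D$ or $\tfrac{n-1}2 D+A$), so the differential shifts path-length by an amount depending only on the parity of $n$, and its kernel and image can be controlled degree by degree. Computing cohomology then yields an explicit basis of each $\HH^n(\L)$ in terms of parallel paths.

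Second, I would pin down the multiplicative structure. For a monomial algebra the Yoneda product is induced by concatenation of parallel paths: if $\eta\in\HH^m(\L)$ is represented by a path $p$ parallel to $R^m$ and $\theta\in\HH^n(\L)$ by a path $q$ parallel to $R^n$, then $\eta\theta$ is represented by the concatenation $pq$ when $pq$ is nonzero in $\L$ and is parallel to an element of $\R^{m+n}$, and is zero otherwise. Using $\kar K\neq 2$ together with the graded commutativity recalled in the Introduction, every homogeneous element of odd degree squares to zero and hence lies in $\N$; so $\HH^*(\L)/\N$ is generated by classes of even degree, and it suffices to analyse concatenation of even-degree parallel paths.

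Third, I would identify the generators. I expect an even-degree class to be non-nilpotent precisely when its underlying parallel path $p$ is a \emph{cycle} (so $\mo(p)=\mt(p)$) such that every power $p^k$ is again nonzero in $\L$ and parallel to an element of some $\R^{n}$; the length bookkeeping is consistent because $\ell(p^k)=k\,\ell(p)=k\cdot\tfrac d2 D$ matches $\ell(R^{kd})$ when $p$ lives in even degree $d$. Each such fundamental cyclic path $p_i$ then contributes a generator $x_i$, in the even degree $d_i$ in which it first occurs, and its powers furnish the polynomial subalgebra $K[x_i]$. There are only finitely many such cycles; I would take $r$ to be their number, which is read off combinatorially from $\rho$ and the quiver. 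The relations $x_ix_j=0$ for $i\neq j$ follow because the concatenation $p_ip_j$ of two distinct fundamental cycles is either zero in $\L$ (when the cycles sit at different vertices) or, even when composable, fails to be parallel to any element of any $\R^n$: the sets $\R^n$ are built by chaining a single relation through successive maximal overlaps, and this rigid propagation cannot interleave two different cycles. Hence $x_ix_j$ is a nilpotent class and vanishes modulo $\N$. Finally, that these $x_i$ and the scalars exhaust $\HH^*(\L)/\N$ reduces to checking that every non-nilpotent even class is, modulo $\N$, a scalar multiple of a power of some $p_i$; here $\gldim\L\geq 4$ ensures we are past the low-degree exceptional behaviour and in the stable regime where the $(D,A)$-stacked pattern governs all of $\R^n$. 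Assembling these facts gives $\HH^*(\L)/\N\cong K[x_1,\dots,x_r]/\langle x_ix_j : i\neq j\rangle$.

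The main obstacle I expect is the combinatorial core of the third step: extracting from Bardzell's explicit differential exactly which parallel cycles survive as non-nilpotent cohomology classes, and proving that distinct fundamental cycles can never be concatenated inside the overlap sets. Everything else is careful bookkeeping with the length formula of Definition \ref{definstacked}, but the rigidity of the overlap construction — that it propagates a single relation in a forced manner and therefore forbids mixing — is the structural fact that simultaneously produces the relations $x_ix_j=0$ and lets one read off the degrees $d_i$ and the number $r$.
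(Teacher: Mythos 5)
Your overall strategy --- compute from Bardzell's minimal bimodule resolution, identify cochains with parallel paths, use $\kar K\neq 2$ to push all odd-degree classes into $\N$, and read the product off combinatorially from concatenation --- is essentially the route actually taken in \cite{GSSmon} and \cite{GS} (note the survey above states the theorem without proof, citing \cite{GS}; the closest ingredient it records is \cite[Theorem 7.1]{GSSmon}). However, there is a genuine error at the combinatorial core of your third step, precisely the step you flagged as the main obstacle.

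Your criterion for a non-nilpotent even class --- a parallel path $p$ of positive length that is a cycle ``such that every power $p^k$ is again nonzero in $\L$'' --- is vacuous: since $\L$ is a finite-dimensional monomial algebra, the lengths of nonzero paths in $\L$ are bounded, so \emph{every} cycle of positive length satisfies $p^k=0$ in $\L$ for $k\gg 0$. As stated, your construction produces no generators at all, i.e.\ $r=0$ always. The actual non-nilpotent classes are of the opposite kind: they are represented by cocycles sending the summand of $P^n$ indexed by a \emph{closed} path $R^n\in\R^n$ ($n$ even, $\ell(R^n)=\tfrac n2 D$) to the \emph{trivial} path at its base vertex, all other summands to zero. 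Non-nilpotence is then a condition inside the overlap sets, not inside $\L$: one needs $(R^n)^k\in\R^{nk}$ for all $k$, which holds exactly when the cycle $R^n$ is built from $\tfrac n2$ relations of length $D$ stacked end to end so that the maximal-overlap construction propagates the cycle around itself indefinitely. Observe that an element of $\R^n$ for $n\geq 2$ contains relations as subpaths and hence is \emph{zero} in $\L$, so nonvanishing of powers in $\L$ can never detect these classes --- the length bookkeeping you did with $\ell(p^k)=k\cdot\tfrac d2 D$ is the right arithmetic but must be applied to the indexing cycles $R^n$, not to parallel paths surviving in $\L$. With this correction, the rest of your outline does align with \cite{GS}: the relations $x_ix_j=0$ for $i\neq j$ come from the failure of mixed concatenations $R^{n_i}R^{n_j}$ to lie in $\R^{n_i+n_j}$ (the rigidity of maximal overlaps you invoked), and $\gldim\L\geq 4$ excludes degenerate low-degree cases and forces the arithmetic compatibility between $D$ and $A$ used throughout; note also that \cite{GS} does not recompute the cohomology of Bardzell's complex from scratch but builds on the general monomial-algebra description of $\HH^*(\L)/\N$ established in \cite{GSSmon}.
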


We do not give the full details of the $x_i$ and the parameter $r$
here; they may be found in \cite{GS}. However, it is worth remarking
that, given any integer $r\geq 0$ and even integers $n_1, \ldots ,
n_r$, there is a finite-dimensional $(D,A)$-stacked monomial algebra
$\L$ with $$\HH^*(\L)/\N \cong K[x_1, \ldots , x_r]/\langle x_ix_j
\mbox{ for } i \neq j\rangle$$ where the degree of $x_i$ is $n_i$,
for all $i = 1, \ldots , r$.

In \cite{FuSn}, necessary and sufficient conditions are given for a simple
module over a $(D,A)$-stacked monomial algebra to have trivial variety.
Referring back to Theorem \ref{prop:SS}(4), this goes part way to
determining necessary and sufficient conditions on any finitely generated
module for it to have trivial variety for this class of algebras.

\bigskip

We end this section with a class of selfinjective special biserial
algebras $\lan$, for $N \geq 1$, studied by Snashall and Taillefer
in \cite{ST}. The study of
these algebras was motivated by the results of \cite{EGST} where the
algebras $\L_1$ arose in the presentation by quiver and relations of
the Drinfeld double ${\mathcal D}(\Lambda_{n,d})$ of the Hopf
algebra $\Lambda_{n,d}$ where $d|n$. The algebra $\Lambda_{n,d}$ is
given by an oriented cycle with $n$ vertices such that all paths of
length $d$ are zero. These algebras also occur in the study of the
representation theory of $U_q(\mathfrak{sl}_2)$; see work of Patra
(\cite{P}), Suter (\cite{Su}), Xiao (\cite{X}),
and also of Chin and Krop (\cite{CK}). The more general algebras $\L_N$
occur in work of Farnsteiner and Skowro\'nski
\cite{FaSk, FaSk2}, where they determine the Hopf algebras associated to
infinitesimal groups whose principal blocks are tame when $K$ is an
algebraically closed field with $\kar K \geq 3$.

Our class of selfinjective special biserial algebras $\lan$ is
described as follows. Firstly, for $m \geq 1$, let ${\mathcal Q}$ be
the quiver with $m$ vertices, labelled $0, 1 \ldots, m-1$, and $2m$
arrows as follows:
$$\xymatrix@=.01cm{
&&&&&&&&&&&\cdot\ar@/^.5pc/[rrrrrd]^{a}\ar@/^.5pc/[llllld]^{\bar{a}}\\
&&&&&&\cdot\ar@/^.5pc/[rrrrru]^{a}\ar@/^.5pc/[llldd]^{\bar{a}}&&&&&&&&&&
\cdot\ar@/^.5pc/[rrrdd]^{a}\ar@/^.5pc/[lllllu]^{\bar{a}}\\\\
&&&\cdot\ar@/^.5pc/[rrruu]^{a}\ar@{.}@/_.3pc/[ldd]&&&&&&&&&&&&&&&&
\cdot\ar@/^.5pc/[llluu]^{\bar{a}}\ar@{.}@/^.3pc/[rdd]\\\\
&&&&&&&&&&&&&&&&&&&&&&&\\
\\
\\
\\\\\\\\\\\\\\\\
&&&&&&&&&&&&&&&&\\
&&&&&&&&&&&\cdot\ar@/_.3pc/@{.}[rrrrru]\ar@/^.3pc/@{.}[lllllu] }$$
Let $a_i$ denote the arrow that goes from vertex $i$ to vertex
$i+1$, and let $\oa_i$ denote the arrow that goes from vertex $i+1$
to vertex $i$, for each $i=0, \ldots , m-1$ (with the obvious
conventions modulo $m$). Then, for $N \geq 1$, we define $\lan$ to
be the algebra given by $\lan=K{\mathcal Q}/I_N$ where $I_N$ is the
ideal of $K{\mathcal Q}$ generated by
$$a_ia_{i+1},\ \ \oa_{i-1}\oa_{i-2},\ \
(a_i\oa_i)^{N}-(\oa_{i-1}a_{i-1})^{N}, \ \ \mbox{ for } i=0, 1,
\ldots , m-1,$$ and where the subscripts are taken modulo $m$. We
note that, if $N = 1$, then the algebra $\Lambda_1$ is a Koszul
algebra. (We continue to write paths from left to right.)

\begin{thm} (\cite[Theorem 8.1]{ST})
For $m \geq 1$ and $N \geq 1$, let $\lan$ be as defined above. Then
$\HH^*(\lan)$ is a finitely generated $K$-algebra. Moreover
$\HH^*(\lan)/\N$ is a commutative finitely generated $K$-algebra of
Krull dimension two.
\end{thm}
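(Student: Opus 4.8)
The plan is to compute the Hochschild cohomology of $\lan$ explicitly from a minimal projective bimodule resolution, and to read off both the finite generation of $\HH^*(\lan)$ and the Krull dimension of $\HH^*(\lan)/\N$ from that computation. First I would set up the combinatorics of the algebra: since $\lan$ is a selfinjective special biserial algebra given by an explicit quiver and relations, its projective bimodule resolution can be described after the manner of Bardzell's resolution for monomial algebras, suitably modified to account for the single non-monomial relation $(a_i\oa_i)^N-(\oa_{i-1}a_{i-1})^N$. I would identify the sets of paths $\R^n$ indexing the projective bimodule $P^n=\oplus\L\mo(R^n)\otimes_K\mt(R^n)\L$ in each degree, paying careful attention to how the commutativity relation modifies the differentials. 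The cyclic symmetry of the quiver under $i\mapsto i+1$ should let me organise this computation so that everything is governed by the two basic cycles $a_i\oa_i$ and $\oa_{i-1}a_{i-1}$ of length $2$ at each vertex.

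Next I would apply $\Hom_{\L^e}(-,\L)$ to this resolution and compute the cohomology. Because $\lan$ is selfinjective and periodicity-like behaviour is expected for special biserial algebras of this shape, I would look for a small number of generating cocycles: plausibly two algebra generators in positive degree (accounting for Krull dimension two) together with the centre $\HH^0(\lan)=Z(\lan)$ and finitely many lower-degree classes. The key computation is to determine the multiplicative structure, i.e.\ the Yoneda products of these generators, which is cleanest to extract either via a comparison of the minimal resolution with a suitable ``diagonal'' or via the cup-product description of $\HH^*$. From this I would obtain an explicit presentation of $\HH^*(\lan)$ by generators and relations, verify directly that it is finitely generated, and identify the nilradical-type ideal $\N$ generated by the homogeneous nilpotent elements.

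Finally, to pin down that $\HH^*(\lan)/\N$ is commutative of Krull dimension exactly two, I would show that modulo $\N$ the surviving generators are two algebraically independent (commuting) homogeneous elements, so that $\HH^*(\lan)/\N$ is a polynomial ring in two variables (up to finite-dimensional adjustments that do not affect Krull dimension), giving the lower bound; the upper bound of two would follow from the explicit presentation by checking that every other generator becomes nilpotent modulo $\N$. Graded commutativity of $\HH^*(\lan)$, recalled in the introduction, together with $\kar K$ considerations guarantees that $\HH^*(\lan)/\N$ is genuinely commutative.

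The main obstacle I anticipate is the bimodule resolution and the product structure. The relation $(a_i\oa_i)^N-(\oa_{i-1}a_{i-1})^N$ is not monomial, so Bardzell's construction does not apply verbatim; the differentials will acquire extra terms from this commutativity relation, and keeping track of them while computing Yoneda products is where the real work lies. In particular, distinguishing the nilpotent cohomology classes from the two genuinely polynomial generators---and verifying that the latter are not killed by $\N$---is the delicate point on which the Krull dimension two conclusion ultimately rests.
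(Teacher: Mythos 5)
Since this is a survey, the paper states the theorem only as a citation of \cite[Theorem 8.1]{ST} and contains no proof; the comparison is therefore with the method of \cite{ST}, which the survey itself describes in Section 4 when it remarks that ``this same technique for constructing a minimal projective bimodule resolution was used \dots\ in \cite{ST} for the algebras $\lan$''. Your plan is essentially that method: build an explicit minimal projective bimodule resolution of $\lan$, apply $\Hom_{\lan^e}(-,\lan)$, compute representative cocycles and their Yoneda products, obtain a presentation of $\HH^*(\lan)$, and then separate the nilpotent classes from those surviving in $\HH^*(\lan)/\N$ to read off finite generation and Krull dimension. Two corrections to the specifics. First, the resolution in \cite{ST} is not a patched Bardzell resolution: Bardzell's construction \cite{Ba} is specific to monomial algebras, and $\lan$ is not monomial precisely because of the socle relations $(a_i\oa_i)^{N}-(\oa_{i-1}a_{i-1})^{N}$. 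Instead \cite{ST} follows the technique of \cite{GSres,GSZ} (also used in \cite{GHMS} and, in Section 4 of this survey, for the counterexample $\A$): one defines sets $g^n$ of elements of the path algebra from the minimal right-module resolution of $\lan/\rrad$ and uses them to label the summands $\lan\mo(g^n_r)\otimes_K\mt(g^n_r)\lan$ of $P^n$ and to write the differentials. This framework is exactly the tool designed to absorb the non-monomial relation you correctly identify as the main obstacle, so your concern is well placed but is resolved by a different (and more robust) construction than the one you propose. Second, your expectation that everything is carried by just two positive-degree generators, with $\HH^*(\lan)/\N$ a polynomial ring in two variables up to finite-dimensional adjustment, is stronger than what the computation yields in general: $\HH^*(\lan)$ needs a sizeable list of generators whose number and degrees depend on $m$, $N$ and $\kar K$. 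For the theorem it suffices---as your last paragraph in fact allows---to exhibit two algebraically independent homogeneous classes modulo $\N$ (lower bound) and to verify that all remaining generators become nilpotent modulo $\N$ (upper bound). With these adjustments your outline is a faithful compression of the actual argument; the substantive work it leaves open is the proof that the proposed $(P^*,d^*)$ is exact and minimal, and the explicit product computations, which occupy the bulk of \cite{ST}.
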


Furthermore, if $N = 1$ then \cite{ST} also showed that the
conditions {\bf (Fg1)} and {\bf (Fg2)} hold with $H =
\HH^{\ev}(\L_1)$.

\section{Counterexample to the conjecture of \cite{SS}}

The previous section concerned algebras where the conjecture of
\cite{SS} concerning the finite generation of the Hochschild
cohomology ring modulo nilpotence has been shown to hold. In this
section we present a counterexample to the conjecture of \cite{SS}.
In \cite{Xu}, Xu gave a counterexample in the case where the field
$K$ has characteristic 2. It can easily be seen, for $\kar K = 2$,
that the category algebra he presented in \cite{Xu}
is isomorphic to the following algebra $\A$ given as a quotient of a
path algebra. Moreover, we will show that this algebra $\A$ provides
a counterexample to the conjecture irrespective of the
characteristic of the field.

\begin{example}\label{counterex}
Let $K$ be any field and let $\A = K{\mathcal Q}/I$ where ${\mathcal
Q}$ is the quiver
$$\xymatrix{
1\ar@(ur,ul)[]_{a}\ar@(dr,dl)[]^{b}\ar[r]^c & 2 }$$ and $I = \langle
a^2, b^2, ab-ba, ac\rangle$.
\end{example}

The rest of this section is devoted to studying this algebra $\A$
and to showing that $\HH^*(\A)/\N$ is not finitely generated as an
algebra.

We begin by giving an explicit minimal projective resolution $(P^*,
d^*)$ for $\A$ as an $\A,\A$-bimodule. The description of the
resolution given here is motivated by \cite{GSres} where the first
terms of a minimal projective bimodule resolution of a finite-dimensional
quotient of a path algebra were determined explicitly from the
minimal projective resolution of $\Lambda/\rrad$ as a right $\L$-module
of Green, Solberg and Zacharia in \cite{GSZ}. This same technique for
constructing a minimal
projective bimodule resolution was used in \cite{GHMS} for any
Koszul algebra, and in \cite{ST} for the algebras $\lan$ which were
discussed at the end of Section \ref{HHmodN}.

From Happel \cite{Ha}, we know that the multiplicity of $\Lambda e_i\otimes_K
e_j\Lambda$ as a direct summand of $P^n$ is equal to
$\dim\Ext^n_\Lambda(S_i, S_j)$, where $S_i$ is the simple
$\A$-module corresponding to the vertex $i$ of ${\mathcal Q}$.
Following \cite{GSres,GSZ}, we start by defining sets $g^n$ in $K{\mathcal
Q}$ inductively, and then labelling the summands of
$P^n$ by the elements of $g^n$. The set $g^0$ is determined by the vertices
of ${\mathcal Q}$, the set $g^1$ by the arrows of ${\mathcal Q}$, and the
set $g^2$ by a minimal generating set of the ideal $I$.

Let \begin{align*} g^0 &= \{g^0_0 = e_1,\ g^0_1 = e_2\},\\
g^1 &= \{g^1_0 = a,\ g^1_1 = -b,\ g^1_2 = c\},\\
g^2 &= \{g^2_0 = a^2,\ g^2_1 = ab-ba,\ g^2_2 = -b^2,\ g^2_3 = ac\}.
\end{align*}
For $n \geq 3$ and $r = 0, 1, \ldots ,  n$, let $$g^n_r = \sum_{p}
(-1)^sp$$ where the sum is over all paths $p$ of length $n$, written
$p= \alpha_1 \alpha_2 \cdots \alpha_n$ where the $\alpha_i$ are
arrows in ${\mathcal Q}$, such that
\begin{enumerate}
\item[(i)] $p$ contains $n-r$ arrows equal to $a$ and $r$ arrows equal to
$b$, and
\item[(ii)] $s = \sum_{\alpha_j = b}j$.
\end{enumerate}
In addition, for $n \geq 3$, define $$g^n_{n+1} = a^{n-1}c.$$

For $r = 0, 1, \ldots , n$, we have that $g^n_r = e_1g^n_re_1$ so we
define $\mo(g^n_r) = e_1 = \mt(g^n_r)$. Moreover $\mo(g^n_{n+1}) =
e_1$ and $\mt(g^n_{n+1}) = e_2$. Thus $$P^n =
\oplus_{r=0}^{n+1}\A\mo(g^n_r) \otimes_K \mt(g^n_r)\A.$$

\bigskip

To describe the map $d^n\colon P^n\to P^{n-1}$, we first need to write
each of the elements $g^n_r$ in terms of the elements of the set $g^{n-1}$,
that is, in terms of $g^{n-1}_0, \ldots ,
g^{n-1}_n$. The following result is straightforward to verify.

\begin{prop}
Suppose $n \geq 2$. Then, keeping the above notation,
$$\begin{array}{llll} g^n_0 &= g^{n-1}_0a &= ag^{n-1}_0 & \\
g^n_r &= g^{n-1}_ra + (-1)^ng^{n-1}_{r-1}b &= (-1)^r(ag^{n-1}_r + bg^{n-1}_{r-1}) & \mbox{ for } 1 \leq r \leq n-1\\
g^n_n &= (-1)^ng^{n-1}_{n-1}b &= (-1)^nbg^{n-1}_{n-1} &\\
g^n_{n+1} &= g^{n-1}_0c &= ag^{n-1}_n. &\\
\end{array}$$
\end{prop}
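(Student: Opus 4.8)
The plan is to derive all four lines directly from the combinatorial definition of $g^n_r$ as the signed sum $\sum_p (-1)^s p$ over length-$n$ paths, by partitioning the indexing set of paths according to either the \emph{last} arrow (for the middle expressions, which involve right multiplication) or the \emph{first} arrow (for the right-hand expressions, which involve left multiplication). Note first that $g^1_0 = a$ and $g^1_1 = -b$ already obey the signed-sum recipe, since the single $b$ in $-b$ occupies position $1$ and so contributes the sign $(-1)^1$; hence the defining formula for $g^n_r$ applies uniformly for all $n \geq 1$, and I need not treat $n=2$ as a separate base case. All identities below are identities in the free path algebra $K\mathcal{Q}$, where no relations interfere with the splitting of paths.

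For the right-multiplication identity $g^n_r = g^{n-1}_r a + (-1)^n g^{n-1}_{r-1} b$, I would write each contributing path as $p = p'\alpha_n$ with $p'$ of length $n-1$. If $\alpha_n = a$, then $p'$ has $n-1-r$ copies of $a$ and $r$ copies of $b$, and since the trailing $a$ contributes nothing to $s$, these paths assemble exactly into $g^{n-1}_r a$. If $\alpha_n = b$, then $p'$ has $n-r$ copies of $a$ and $r-1$ copies of $b$, so it contributes to $g^{n-1}_{r-1}$, while the trailing $b$ in position $n$ increases $s$ by $n$, producing the factor $(-1)^n$ and hence the term $(-1)^n g^{n-1}_{r-1} b$. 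The boundary cases $r=0$ and $r=n$ follow by the same argument, with one of the two summands empty: having no $b$ forces the last arrow to be $a$, while having only $b$'s forces it to be $b$.

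For the left-multiplication identity $g^n_r = (-1)^r(a g^{n-1}_r + b g^{n-1}_{r-1})$, I would instead write $p = \alpha_1 p''$ with $p''$ of length $n-1$. The one place requiring genuine care is the sign bookkeeping: prepending an arrow shifts every $b$ coming from $p''$ one place to the right, so the exponent $s$ changes by the number of such copies of $b$. When $\alpha_1 = a$, the tail $p''$ contributes to $g^{n-1}_r$ and carries $r$ copies of $b$, so $s$ increases by $r$, yielding the factor $(-1)^r$. When $\alpha_1 = b$, the leading $b$ in position $1$ contributes $1$ to $s$ while $p''$ contributes to $g^{n-1}_{r-1}$ with its $r-1$ copies of $b$ shifted, giving a total change $1+(r-1)=r$; the two cases therefore combine to the single common factor $(-1)^r$ in front of both terms. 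The extreme values $r=0,n$ are again exactly those in which only one summand survives.

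Finally, for $g^n_{n+1}$ I would use the closed forms of the two extreme ordinary generators: $g^{n-1}_0 = a^{n-1}$ (the all-$a$ path) and $g^{n-1}_n = a^{n-2}c$ (the distinguished generator $g^{n-1}_{(n-1)+1}$). Then $g^{n-1}_0 c = a^{n-1}c = g^n_{n+1}$ and $a\,g^{n-1}_n = a\cdot a^{n-2}c = a^{n-1}c$, as claimed. The entire argument thus reduces to partitioning the set of paths and tracking the exponent $s$; I expect the uniform shift of $s$ by $r$ in the left-multiplication case to be the only real subtlety, everything else being routine bookkeeping.
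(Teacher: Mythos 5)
Your proof is correct and is precisely the direct verification the paper intends: the paper gives no argument at all, stating only that the result ``is straightforward to verify,'' and your partition of paths by last arrow (for the middle expressions) and by first arrow (for the right-hand ones), with the uniform shift of the exponent $s$ by $r$ upon prepending an arrow, supplies exactly those omitted details. The lone cosmetic point is that the uniformity of the signed-sum formula should be confirmed at $n=2$ as well as at $n=1$ (e.g.\ $g^2_1 = ab - ba$, where the $b$ sits in position $2$, resp.\ position $1$), which holds just as immediately as your $n=1$ check.
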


We define the map $d^0\colon P^0\to \A$ to be the multiplication
map. To define $d^n$ for $n \geq 1$, we need one further piece of
notation. In describing the image $d^n(\mo(g^n_r) \otimes
\mt(g^n_r))$ in the projective module $P^{n-1}$, we use a subscript
under $\otimes$ to indicate the appropriate summand of the
projective module $P^{n-1}$. Specifically, let $-\otimes_r-$ denote
a term in the summand of $P^{n-1}$ corresponding to $g^{n-1}_r$.
Nonetheless all tensors are over $K$; however, to simplify notation,
we omit the subscript $K$. The maps $d^n\colon P^n\to P^{n-1}$ for
$n \geq 1$ may now be defined. The proof of the following result is
omitted and is similar to those in \cite[Proposition 2.8]{GSres} and
\cite[Theorem 1.6]{ST}.

\begin{thm}\label{thm:resol}
For the algebra $\A$ of Example \ref{counterex}, the sequence
$(P^*,d^*)$ is a minimal projective resolution of $\A$ as an
$\A,\A$-bimodule, where, for $n \geq 0$,
$$P^n = \bigoplus_{r=0}^{n+1}\A\mo(g^n_r)\otimes\mt(g^n_r)\A,$$
the map $d^0 : P^0 \to \A$ is the multiplication map,
the map $d^1 : P^1 \to P^0$ is given by
$d^1(\mo(g^1_r)\otimes\mt(g^1_r)) =$
$$\left\{ \begin{array}{ll}
\mo(g^1_0)\otimes_{0}a - a\otimes_{0}\mt(g^1_0) & \mbox{ for } r=0;\\
-\mo(g^1_1)\otimes_{0}b + b\otimes_{0}\mt(g^1_1) & \mbox{ for } r=1;\\
\mo(g^1_{2})\otimes_{0}c - c\otimes_{1}\mt(g^1_{2})
& \mbox{ for } r=2;
\end{array}\right.$$
and, for $n \geq 2$,  the map $d^n : P^n \to P^{n-1}$ is given by
$d^n(\mo(g^n_r)\otimes\mt(g^n_r)) =$
$$\left\{ \begin{array}{ll}
\mo(g^n_0)\otimes_{0}a + (-1)^n
a\otimes_{0}\mt(g^n_0) & \mbox{ for } r=0;\\
\mo(g^n_r)\otimes_{r}a + (-1)^n\mo(g^n_r)\otimes_{r-1}b & \\
\hspace{2cm} + (-1)^{r+n}(a\otimes_{r}\mt(g^n_r) +
b\otimes_{r-1}\mt(g^n_r))
& \mbox{ for } 1 \leq r \leq n-1;\\
(-1)^n\mo(g^n_n)\otimes_{n-1}b +
b\otimes_{n-1}\mt(g^n_n) & \mbox{ for } r = n;\\
\mo(g^n_{n+1})\otimes_{0}c + (-1)^na\otimes_{n}\mt(g^n_{n+1})
& \mbox{ for } r=n+1.\\
\end{array}\right.$$
\end{thm}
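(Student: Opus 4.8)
The plan is to check the four properties that together say $(P^*,d^*)$ is a minimal projective bimodule resolution: that each $P^n$ is projective over $\A^e=\A^{\op}\otimes_K\A$, that $(P^*,d^*)$ is a complex, that it is minimal, and that it is exact with $H_0=\A$. Projectivity is immediate, since each $P^n$ is a direct sum of bimodules $\A\mo(g^n_r)\otimes_K\mt(g^n_r)\A=\A e\otimes_K e'\A$ with $e,e'$ idempotents, and such bimodules are projective $\A^e$-modules. That $H_0=\A$ reduces to surjectivity of the multiplication map $d^0$, which is clear since $e_1+e_2=1$; the remaining equality $\Ker d^0=\Im d^1$ is subsumed in the exactness below.

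Next I would verify that $d^{n-1}\comp d^n=0$. The key tool is the Proposition, which writes each generator $g^n_r$ in two ways, once with the connecting arrow on the right ($g^{n-1}_?\,\alpha$) and once with it on the left ($\alpha\,g^{n-1}_?$). Substituting these two expressions into the formula for $d^{n-1}$, the ``right-multiplication'' terms of $d^n$ meet the ``left-multiplication'' terms of $d^{n-1}$, and the defining relations $a^2=0$, $b^2=0$, $ab=ba$ and $ac=0$ force the resulting cross terms to cancel in pairs; the signs $(-1)^n$ and $(-1)^{r+n}$ in the differential are tailored to produce exactly these cancellations. This computation is routine but must be carried out with the boundary indices $r=0,1,n-1,n$ and the extra summand $r=n+1$ (coming from $g^n_{n+1}=a^{n-1}c$) treated separately.

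Minimality is then read off directly from the formulas: every component of every $d^n$ multiplies a generator by one of the arrows $a$, $b$ or $c$, on the left or on the right, so each entry of the differential lies in $\rrad(\A^e)$ and $d^n(P^n)\subseteq\rrad\cdot P^{n-1}+P^{n-1}\cdot\rrad$. In particular, by Happel's formula the multiplicity of $\A e_i\otimes_K e_j\A$ in $P^n$ equals $\dim\Ext^n_\A(S_i,S_j)$, so the terms already have the correct ranks for the minimal resolution.

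The main obstacle is exactness, and here the two-sided structure supplied by the Proposition is essential. I would regard the augmented complex $X_*\colon\cdots\to P^1\to P^0\extto{d^0}\A\to 0$ as a complex of finitely generated projective \emph{left} $\A$-modules. For any bounded-below complex of finitely generated projective left $\A$-modules, a Krull--Schmidt decomposition into a minimal subcomplex and contractible summands shows that the complex is acyclic if and only if it stays acyclic after applying $\A/\rrad\otimes_\A-$; indeed a nonzero minimal complex acquires zero differentials, hence nonzero homology, upon reduction modulo $\rrad$. Applying $\A/\rrad\otimes_\A-$ to $X_*$ collapses each left tensor factor $\A\mo(g^n_r)$ to the one-dimensional $S_1$ and annihilates every component of $d^n$ that multiplies on the left by an arrow, leaving the complex of right $\A$-modules $\cdots\to\oplus_r\mt(g^1_r)\A\to\oplus_r\mt(g^0_r)\A\to\A/\rrad\to 0$ whose differential is the right-hand reduction of $d^n$. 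By construction the $g^n_r$ are the generators of the minimal resolution produced by the algorithm of Green, Solberg and Zacharia, so this reduced complex is precisely their minimal projective resolution of $\A/\rrad$ as a right $\A$-module \cite{GSZ} and is therefore exact; hence $X_*$ is acyclic and $(P^*,d^*)$ is a resolution. The delicate points I expect are confirming that the $g^n_r$ really do satisfy the Green--Solberg--Zacharia recursion and keeping the signs consistent in the complex check; an alternative to the reduction argument would be to write down a contracting homotopy $s^n\colon P^n\to P^{n+1}$ explicitly and verify $d^{n+1}\comp s^n+s^{n-1}\comp d^n=\id$, but that is considerably more laborious.
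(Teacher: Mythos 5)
Your proposal is correct and follows essentially the route the paper intends: the paper omits the proof, referring to \cite[Proposition 2.8]{GSres} and \cite[Theorem 1.6]{ST}, whose technique is precisely yours --- verify $d^{n-1}\comp d^n=0$ using the two expressions for each $g^n_r$ given in the Proposition (the signs do cancel against the relations $a^2$, $b^2$, $ab-ba$, $ac$), read off minimality from the radical entries, and obtain exactness by applying $\A/\rrad\otimes_\A-$ to reduce to the Green--Solberg--Zacharia minimal right-module resolution of $\A/\rrad$ built from the same sets $g^n$ \cite{GSZ}. The one caveat, which you rightly flag yourself, is that checking the $g^n_r$ genuinely satisfy the GSZ conditions (so the reduced complex is exact) is the real computational content and must be done independently of the Koszulity of $\A$, since the paper deduces Koszulity afterwards from this very theorem.
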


\begin{prop} The algebra $\A$ of Example \ref{counterex} is a Koszul
algebra.
\end{prop}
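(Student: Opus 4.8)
The plan is to read off Koszulity directly from the bimodule resolution of Theorem~\ref{thm:resol}. Grade $\A = K\mathcal{Q}/I$ by path length. Since $\mathcal{Q}$ contributes no relations in degree $1$ and the minimal generators $a^2,\ b^2,\ ab-ba,\ ac$ of $I$ are all homogeneous of degree $2$, the algebra $\A$ is quadratic. It then remains to show that $\A/\rad\A$ admits a linear minimal graded projective resolution, that is, that the minimal projective resolution of each simple left module $S_j$ has its $n$-th term generated in internal degree $n$.

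First I would record that the bimodule resolution $(P^*, d^*)$ is itself linear. By construction each generator $g^n_r$ with $0 \leq r \leq n$ is a signed sum of paths of length $n$ in the loops $a, b$, while $g^n_{n+1} = a^{n-1}c$ is a path of length $n$; hence every summand $\A\mo(g^n_r)\otimes\mt(g^n_r)\A$ of $P^n$ is generated in degree $n$. Moreover, inspecting the formulas for $d^n$ in Theorem~\ref{thm:resol}, every entry of the differential is left or right multiplication by a single arrow ($a$, $b$, or $c$), i.e.\ a homogeneous map of degree $1$. Thus $(P^*, d^*)$ is a minimal \emph{linear} resolution of $\A$ over $\A^e$.

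Next I would pass from the bimodule resolution to the one-sided resolutions by applying $-\otimes_\A S_j$. Since each $P^n$ is projective, hence flat, as a right $\A$-module and $\A\otimes_\A S_j = S_j$, the complex $P^*\otimes_\A S_j$ is a projective resolution of $S_j$ as a left $\A$-module. Using that $\mt(g^n_r)\A\otimes_\A S_j = \mt(g^n_r)S_j$ is one-dimensional and concentrated in degree $0$ when $\mt(g^n_r)=e_j$ and zero otherwise, one obtains
$$P^n\otimes_\A S_j \cong \bigoplus_{r\,:\,\mt(g^n_r)=e_j}\A\mo(g^n_r),$$
a projective module whose generators all sit in internal degree $n$. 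Because the surviving differential entries are arrows, they lie in $\rad\A$, so this resolution is minimal. Hence the minimal projective resolution of each $S_j$ is linear, $\A/\rad\A$ has a linear resolution, and $\A$ is Koszul. As a consistency check, the multiplicity statement of Happel quoted before Theorem~\ref{thm:resol} identifies $\dim\Ext^n_\A(S_i,S_j)$ with the number of $g^n_r$ satisfying $\mo(g^n_r)=e_i$, $\mt(g^n_r)=e_j$, and each such $g^n_r$ contributes in internal degree exactly $n$, so $\Ext^*_\A(\A/\rad\A,\A/\rad\A)$ is generated in degrees $0$ and $1$.

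The one point to verify with care is that $-\otimes_\A S_j$ carries the bimodule resolution to an honest minimal resolution of $S_j$: exactness is preserved because the terms $P^n$ are flat as right $\A$-modules, and the internal grading shift is exactly $n$ precisely because every $g^n_r$ is homogeneous of degree $n$. This degree bookkeeping, together with the fact that the right-multiplication terms of $d^n$ die on the degree-$0$ module $S_j$ while the left-multiplication terms remain arrows in $\rad\A$, is the only real content; everything else is the standard reduction of Koszulity of $\A$ to linearity of the resolutions of its simple modules.
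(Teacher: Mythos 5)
Your proof is correct and takes essentially the same approach as the paper: the paper likewise applies the functor $\A/\rrad\otimes_\A -$ to the bimodule resolution $(P^*,d^*)$ of Theorem~\ref{thm:resol} to obtain a linear minimal projective resolution of $\A/\rrad$, and concludes that $\A$ is Koszul. Your version merely works one simple module at a time on the other side (via $-\otimes_\A S_j$) and spells out the exactness, minimality, and internal-degree bookkeeping that the paper's two-line proof leaves implicit.
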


\begin{proof}
We apply the functor $\A/\rrad \otimes_{\A} -$ to the resolution
$(P^*,d^*)$ of Theorem \ref{thm:resol} to give a (minimal)
projective resolution of $\A/\rrad$ as a right $\A$-module. Thus
$\A/\rrad$ has a linear projective resolution, and so $\A$ is a
Koszul algebra.
\end{proof}

Since, $\A$ is a Koszul algebra, it now follows from \cite{BGSS}
that the image of the ring homomorphism $\phi_{\A/\rrad} =
\A/\rrad\otimes_\A- : \HH^*(\A)
\to E(\A)$ is the graded centre $Z_\gr(E(\A))$ of $E(\A)$, where
$Z_\gr(E(\A))$ is the subalgebra generated by all homogeneous
elements $z$ such that $zg = (-1)^{|g||z|}gz$ for all $g \in E(\A)$.
Thus $\phi_{\A/\rrad}$ induces an isomorphism $\HH^*(\A)/\N \cong
Z_\gr(E(\A))/\N_Z$, where $\N_Z$ denotes the ideal of $Z_\gr(E(\A))$
generated by all homogeneous nilpotent elements.

From \cite[Theorem 2.2]{GMV}, $E(\A)$
is the Koszul dual of $\A$ and is given explicitly by quiver and relations
as $E(\A) \cong K{\mathcal Q}^{\op}/I^\perp$, where
${\mathcal Q}$ is the quiver of $\A$ and $I^\perp$ is the ideal generated by
the orthogonal relations to those of $I$. Specifically, for this example,
$E(\A)$ has quiver
$$\xymatrix{
1\ar@(ur,ul)[]_{a^o}\ar@(dr,dl)[]^{b^o} & 2\ar[l]^{c^o} }$$
and $I^{\perp} = \langle a^ob^o + b^oa^o, b^oc^o\rangle$, where, for an
arrow $\alpha \in {\mathcal Q}$, we denote by $\alpha^o$ the corresponding
arrow in ${\mathcal Q}^{\op}$.
Moreover, the left modules over $E(\A)$ are the right modules over
$K{\mathcal Q}/\langle ab + ba, bc\rangle$.

It is now
easy to calculate $Z_\gr(E(\A))$ to give the following theorem. The
structure of $\HH^*(\A)/\N$ for $\kar K = 2$ was given by Xu in
\cite{Xu}.

\begin{thm}
Let $\A$ be the algebra of Example \ref{counterex}.
\begin{enumerate}
\item $$Z_\gr(E(\A)) \cong \left \{ \begin{array}{ll}
K\oplus K[a,b]b & \mbox{ if } \kar K = 2\\
K\oplus K[a^2,b^2]b^2 & \mbox{ if } \kar K \neq
2,\end{array}\right.$$ where $b$ is in degree 1 and $ab$ is in
degree 2.
\item $$\HH^*(\A)/\N \cong \left \{ \begin{array}{ll}
K\oplus K[a,b]b & \mbox{ if } \kar K = 2\\
K\oplus K[a^2,b^2]b^2 & \mbox{ if } \kar K \neq
2,\end{array}\right.$$ where $b$ is in degree 1 and $ab$ is in
degree 2.
\item $\HH^*(\A)/\N$ is not finitely generated as an algebra.
\end{enumerate}
\end{thm}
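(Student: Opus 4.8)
The plan is to reduce everything to the explicit ring computed in parts~(1) and~(2) and then to show directly that this ring needs infinitely many algebra generators. By the isomorphism $\HH^*(\A)/\N \cong Z_\gr(E(\A))/\N_Z$ established above, together with part~(2), it is enough to prove that the $K$-algebra
$$R = K \oplus K[a,b]\,b \ \ (\kar K = 2), \qquad R = K \oplus K[a^2,b^2]\,b^2 \ \ (\kar K \neq 2)$$
is not finitely generated, where $a$ and $b$ are both homogeneous of degree $1$. In either case $R$ is a non-negatively graded algebra with $R_0 = K$ and each graded piece finite-dimensional, so it is connected graded; write $R_+$ for its augmentation ideal, that is, its positive-degree part. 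Concretely, $R_+$ consists of those elements of $K[a,b]$ (respectively $K[a^2,b^2]$) that are divisible by $b$ (respectively by $b^2$).

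Next I would isolate a family of indecomposable homogeneous elements. The key observation is that a product of two positive-degree elements of $R$ is divisible by a higher power of $b$ than a typical element of $R_+$: since every element of $R_+$ is divisible by $b$ (resp.\ $b^2$), we have $R_+\cdot R_+ \subseteq b^2 K[a,b]$ (resp.\ $R_+\cdot R_+ \subseteq b^4 K[a^2,b^2]$). Now consider the elements $c_n = a^n b$ for $n \geq 0$ when $\kar K = 2$, and $c_n = a^{2n} b^2$ for $n \geq 0$ when $\kar K \neq 2$. Each $c_n$ is a homogeneous element of $R_+$, the $c_n$ lying in pairwise distinct degrees. However $c_n$ is not divisible by $b^2$ (resp.\ $b^4$): dividing by $b$ (resp.\ $b^2$) leaves $a^n$ (resp.\ $a^{2n}$), which is not itself divisible by $b$ (resp.\ $b^2$). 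Hence $c_n \notin R_+\cdot R_+ = R_+^2$ for every $n$.

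Finally I would invoke the standard graded Nakayama criterion: for a connected graded $K$-algebra $R$ with $R_0 = K$, a set of homogeneous elements generates $R$ as a $K$-algebra if and only if its image spans $R_+/R_+^2$; in particular $R$ is finitely generated exactly when $R_+/R_+^2$ is finite-dimensional. By the previous paragraph the classes of the $c_n$ in $R_+/R_+^2$ are all nonzero, and since they sit in pairwise distinct degrees they are linearly independent. Thus $R_+/R_+^2$ is infinite-dimensional, so $R$, and therefore $\HH^*(\A)/\N$, cannot be finitely generated as a $K$-algebra.

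I expect no serious obstacle here: the genuine content of the theorem lies in parts~(1) and~(2), namely the identification of the graded centre $Z_\gr(E(\A))$ of the Koszul dual and hence of $\HH^*(\A)/\N$. Granting that explicit description, statement~(3) is short. The only points needing a little care are the divisibility argument showing $c_n \notin R_+^2$ (which is what makes the $c_n$ indecomposable and forces new generators in arbitrarily high degree) and the correct application of the graded generation criterion.
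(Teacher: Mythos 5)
Your treatment of part (3) is correct and complete: for a connected graded $K$-algebra $R$ with $R_0=K$ and finite-dimensional homogeneous components, finite generation as a $K$-algebra is indeed equivalent to $\dim_K R_+/R_+^2<\infty$ (any finite generating set may be replaced by its homogeneous components), and your divisibility observation --- $R_+\cdot R_+\subseteq b^2K[a,b]$ (resp.\ $b^4K[a^2,b^2]$) while $c_n=a^nb$ (resp.\ $a^{2n}b^2$) is not so divisible, the $c_n$ lying in distinct degrees so that their classes in the graded space $R_+/R_+^2$ are linearly independent --- is exactly the right way to exhibit indecomposables in infinitely many degrees. The paper itself states (3) with no written argument at all, so on this point you are more careful than the source, and your reasoning matches the intended one.

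However, as a proof of the theorem as stated your proposal has a genuine gap: parts (1) and (2) are granted rather than proved, and they are the computational heart of the statement (as you yourself acknowledge). What the paper does, and what you would need to supply, is the calculation of $Z_\gr(E(\A))$ from the explicit presentation $E(\A)\cong K{\mathcal Q}^{\op}/\langle a^ob^o+b^oa^o,\ b^oc^o\rangle$ obtained via the Koszul dual. Concretely: (i) there are no cycles at vertex $2$, so $e_2E(\A)e_2=Ke_2$ and a homogeneous graded-central element of positive degree lies in $e_1E(\A)e_1\cong K\langle a,b\rangle/(ab+ba)$; (ii) graded centrality tested against the arrow corresponding to $c$ forces such an element to annihilate it, and since $a^{n-1}c\neq 0$ for all $n$ (visible in the bimodule resolution via the summands $g^n_{n+1}$, i.e.\ $\Ext^n_\A(S_1,S_2)\neq 0$), this forces divisibility by $b$; (iii) in $\kar K=2$ the algebra $K\langle a,b\rangle/(ab+ba)$ is the commutative polynomial ring $K[a,b]$ and one lands on $K\oplus K[a,b]b$, while in $\kar K\neq 2$ an odd-degree graded-central element $z$ would satisfy $2z^2=0$ with $z^2\neq 0$ (the $(-1)$-quantum plane is a domain), and compatibility with $a$ and $b$ cuts the even part down to $K[a^2,b^2]$, whence $K\oplus K[a^2,b^2]b^2$. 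Finally, (2) does not need to be cited independently of (1): the rings in (1) are reduced, being subrings of polynomial rings, so $\N_Z=0$ and the isomorphism $\HH^*(\A)/\N\cong Z_\gr(E(\A))/\N_Z$ established before the theorem gives (2) directly from (1). Without some version of this computation the theorem is unproved, since your argument for (3) rests entirely on the explicit form of the ring produced by (1) and (2).
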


This example now raises the new question as to whether we can give
necessary and sufficient conditions on a finite-dimensional algebra
for its Hochschild cohomology ring modulo nilpotence to be finitely
generated as an algebra.

\end{document}